\newcounter{aufgabe}
\newcounter{equiv}
\newcounter{afzg}
\newtheorem{aussage*}{$\!\!\!$}[section]
\newenvironment{aussage}%
{\setcounter{equation}%%@
{0}%
\begin{aussage*}}{\end{aussage*}}
\newenvironment{proof}%
{\setcounter{equation}%%@
{0}%
\begin{trivlist}\item[]{ Proof.}}%
{\hspace*{\fill}$\square$\end{trivlist}}
\newcommand{\Hom}{\mbox {\rm Hom\,}}
\newcommand{\Ke}{\mbox{\rm Ker\,}}
\newcommand{\Img}{\mbox{\rm Im\,}}
\newcommand{\End}{\mbox {\rm End}}
\newcommand{\op}{\oplus}
\newcommand{\sq}{\subseteq}
\newcommand{\Rej}{\rm Re \, }
\newcommand{\ba}{\begin{aussage} }
\newcommand{\ea}{\end{aussage} }
\newtheorem{thm}{Theorem}[section]
\newtheorem{cor}[thm]{Corollary}
\newtheorem{lem}[thm]{Lemma}
\newtheorem{prop}[thm]{Proposition}
\newtheorem{defn}[thm]{Definition}
\newtheorem{exam}[thm]{Example}
\title{ t-Dual Baer Modules and t-Lifting Modules }
\author {Tayyebeh Amouzegar$^1$, Derya Keskin T\"{u}t\"{u}nc\"{u}$^2$  and Yahya Talebi$^3$}
\date{}
\begin{document}
\maketitle
\begin{center}
 \small {
 $^{(1,3)}$ Department of Mathematics, Faculty of Science,
University of Mazandaran, Babolsar, Iran, e-mails:
t.amoozegar@yahoo.com,
talebi@umz.ac.ir\\

$^{(2)}$ Department of Mathematics, University of Hacettepe,  06800
Beytepe, Ankara, Turkey, e-mail: keskin@hacettepe.edu.tr }

\end{center}

\thanks{(11.10.2011: This work is a part of the PhD. thesis of the first author.)}

\begin{center}
\begin{abstract}
  We introduce the notions of t-lifting modules and t-dual Baer
  modules, which are generalizations of lifting modules. It is shown
  that an amply supplemented module $M$ is t-lifting if and only if
  $M$ is t-dual Baer and a t-$\mathcal{K}$-module. We also prove
  that, over a right perfect ring $R$, every noncosingular $R$-module is injective if and only if
  every $R$-module is t-dual Baer if and only if every $R$-module is
  t-lifting if and only if every injective $R$-module is t-lifting.
\end{abstract}
\end{center}
%\singlespace

{\em Keywords}:  t-dual Baer module;  t-lifting module;
noncosingular module.

2000 {\em AMS Mathematics Subject Classification}: 16D10, 16D80.

 \section{Introduction}

Throughout this paper, $R$ will denote an arbitrary associative
ring with identity, $M$  a unitary  right $R$-module and
$S=\End(M)$ the ring of all $R$-endomorp-hisms of $M$. We will use
the notation $N\leq_e M$ to indicate that $N$ is essential in $M$
(i.e., $N\cap L\neq 0 \ \forall 0\neq L\leq M)$;
 $N\ll M$ means that $N$ is small
in $M$ (i.e. $\forall L \lneq M, L+N\neq M$). The notation $N
\leq^\op M$ denotes that $N$ is a direct summand of $M$.
  We also denote
     $D_S(N)=\{\phi \in S |\Img \phi \sq N\} $, for $N\sq M$.

Recall that an  $R$-module $M$ is  an \emph{extending}  module if
for every submodule  $A$
 of $M$ there exists a direct summand $B$ of $M$ such that
 $A\leq_e B$. Dually, a module $M$ is called a \emph{lifting }  module if,
for every  submodule $A$ of $M$ there exists a  direct summand $N$
of $M$ with $N\subseteq A$ and $A/N\ll M/N$. $M$ is lifting if and
only if $M$ is amply supplemented and every coclosed submodule of
$M$ is a direct summand (see \cite[22.3]{clvw}).

In \cite{tv}, Talebi and Vanaja defined $\overline Z(M)$  as
follows:
$$\overline Z(M) = \Rej (M, {\cal S}) = \bigcap \{\Ke(g) \, | \, g
\in \Hom (M, L), L \in {\cal S}\},$$ where $\cal S$ denotes the
class of all small modules. Note that any module is called {\it
small} if it is small in its injective hull.

 They called $M$ a {\em cosingular} ({\em noncosingular}) module if $\overline Z(M) = 0$
($\overline Z(M) = M$). Note that $\overline Z^2(M)$ is defined as
$\overline Z(\overline Z(M))$.

In \cite{ik}, Kaplansky introduced the concept of a Baer ring. A
ring $R$ is called  {\em right Baer }(resp.  \emph{left Baer}) if
the right (resp. left) annihilator of any nonempty subset
 of $R$ is generated  by an idempotent. Rizvi
and Roman introduced the concept of   Baer  modules in \cite{cr}.
According to \cite{cr}, $M$ is called a \emph{Baer } module if the
right annihilator in $M$ of any  left ideal  of $S$ is a direct
summand of $M$. In  \cite{kt}, Keskin-T\"{u}t\"{u}nc\"{u} and Tribak
introduced the concept of dual Baer
 modules. A module $M$ is called a \emph{dual Baer}
 module if for every right ideal
 $I$ of $S$, $\sum _{\phi \in I}\Img \phi$ is a direct
summand of $M$, equivalently, $D_S(N)$ is a direct summand of $M$
for every submodule $N$ of $M$. Asgari and Haghany introduced
t-extending and t-Baer modules in \cite{AH} as two generalizations
of extending modules. In this paper, motivated by this nice work,
we introduce t-lifting modules and t-dual Baer modules to
generalize lifting modules and obtain several dual results.

Let $M$ be a module and $A\leq M$. We say that $A$ is {\em
t-small} (written $A\ll_t M$) if for every submodule $B$ of $M$,
$\overline Z^2(M)\leq A+B$ implies that $\overline Z^2(M)\leq B$.
Some equivalent conditions for a t-small submodule are given in
Proposition \ref{2.2}. A submodule $C$ of a module $M$ is called
{\em t-coclosed} if $C/C'\ll_t M/C'$ implies that $C=C'$. We say
that a module $M$ is {\em t-lifting} if for every submodule $A$ of
$M$ there exists a direct summand $N$ of $M$ with $N\leq A$ and
$A/N\ll_t M/N$. In section 2, after giving some properties of
t-coclosed submodules, we get some equivalent statements for a
t-lifting module. We show that an amply supplemented module is
t-lifting if and only if every t-coclosed submodule is a direct
summand of $M$ if and only if $\overline Z^2(M)$ is a direct
summand of $M$ and $\overline Z^2(M)$ is lifting (Theorem
\ref{2.11}). Let $M$ be a module. We say that $M$ is a {\em t-dual
Baer module} if $I\overline Z^2(M)$ is a direct summand of $M$,
for every right ideal $I$ of $\End(M)$. We study t-dual Baer
modules and prove in section 3 that a module $M$ is t-dual Baer if
and only if $A\overline Z^2(M)$ is a direct summand of $M$ for
every subset $A$ of $\End(M)$ if and only if $\overline Z^2(M)$ is
a dual Baer direct summand of $M$ (Theorem \ref{3.2}). In
addition, a closed connection exists between t-lifting modules and
t-dual Baer modules; in fact, an amply supplemented module is
t-lifting if and only if it is t-dual Baer and a
t-$\mathcal{K}$-module (Theorem \ref{3.9}). Finally, we prove the
following:

 Let $R$ be a right perfect ring. Then the  following statements are equivalent:

$(1)$ Every noncosingular $R$-module is injective;

$(2)$ For every $R$-module $M$, $\overline Z^2(M)$ is a direct
summand of $M$ and $\overline Z^2(M)$ is  injective;

$(3)$  Every  $R$-module is t-dual Baer;

$(4)$ Every  $R$-module is t-lifting;

$(5)$ Every injective  $R$-module is t-lifting;

$(6)$ Every noncosingular  $R$-module is dual Baer and $\overline
Z^2(M)$ is a direct summand of $M$  for every $R$-module $M$;

$(7)$ Every noncosingular  $R$-module is lifting and $\overline
Z^2(M)$ is a direct summand of $M$ for every $R$-module $M$ (Theorem
\ref{3.12}).

For the undefined notions in this paper we refer to \cite{clvw}.

\section{t-coclosed submodules and t-lifting modules}

\begin{defn} {\rm A submodule $A$ of $M$ is called \emph{t-small} in $M$,
denoted by $A\ll_t M$, if for every submodule $B$ of $M$, $\overline
Z^2(M) \leq A+B$ implies that $\overline Z^2(M) \leq B$}.
\end{defn}
It is clear that if $A$ is a submodule of a noncosingular module
$M$, then $A$ is t-small in $M$ if and only if  $A$ is small in
$M$.

The concept of amply supplemented modules will be used
significantly in the paper. So we prefer to give its definition.
Any module $M$ is called {\em amply supplemented} if for any two
submodules $A$ and $B$ with $M=A+B$, $A$ contains a supplement of
$B$. Note that a submodule $X$ of any module $M$ is called a {\em
supplement} of any submodule $Y$ in $M$ if $M=X+Y$ and $X\cap Y$
is small in $X$.

\begin{prop} \label{2.2} Let  $M$ be an amply supplemented module and  $A$  a submodule of $M$. Then the  following statements are
equivalent:

$(1)$ $A$ is t-small in $M$.

$(2)$  $A\cap \overline Z^2(M) \ll \overline Z^2(M)$.

$(3)$  $A\cap \overline Z^2(M) \ll M$.

$(4)$  $\overline Z^2(A) =0$, namely, $\overline Z(A)$ is
cosingular.
\end{prop}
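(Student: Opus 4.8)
The plan is to run the single cycle $(1)\Rightarrow(2)\Rightarrow(3)\Rightarrow(4)\Rightarrow(1)$, writing $Z:=\overline Z^2(M)$ throughout. Before starting I would record the facts about $\overline Z$ from \cite{tv} that do the real work: $Z$ is the largest noncosingular submodule of $M$ (so $Z$ is noncosingular and $M/Z$ has no nonzero noncosingular submodule), every small submodule of any module is itself a small, hence cosingular, module, and a module that is at once cosingular and noncosingular is zero. I would also use the elementary lattice fact that for $K\leq L\leq M$, $K\ll L$ implies $K\ll M$: if $K+X=M$, the modular law gives $L=L\cap(K+X)=K+(L\cap X)$, so $L\cap X=L$, whence $L\leq X$ and $X=M$.

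The three forward implications are then quick. For $(1)\Rightarrow(2)$: if $(A\cap Z)+B'=Z$ with $B'\leq Z$, then $Z\leq A+B'$, so t-smallness forces $Z\leq B'$ and thus $B'=Z$; hence $A\cap Z\ll Z$. For $(2)\Rightarrow(3)$: since $A\cap Z\leq Z\leq M$, the lattice fact above turns $A\cap Z\ll Z$ into $A\cap Z\ll M$. For $(3)\Rightarrow(4)$: $A\cap Z\ll M$ makes $A\cap Z$ a small, hence cosingular, module; on the other hand $\overline Z^2(A)$ is noncosingular and, being a noncosingular submodule of $A\leq M$, lies in $A\cap Z$, so it is a noncosingular submodule of a cosingular module and therefore $\overline Z^2(A)=0$.

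The only implication that uses amply supplementedness, and the one I expect to be the crux, is $(4)\Rightarrow(1)$. I would argue it by contraposition: assuming $A$ is not t-small, I obtain $B\leq M$ with $Z\leq A+B$ but $Z\not\leq B$, and I must manufacture a nonzero noncosingular submodule of $A$. Now $(Z+B)/B$ is a nonzero quotient of $Z$, hence noncosingular, and it is isomorphic to $P/(A\cap B)$ with $P:=A\cap(Z+B)\leq A$; so $P$ has a nonzero noncosingular quotient. Here amply supplementedness enters. First, $Z$ is coclosed in $M$ (if $L\leq Z$ and $Z/L\ll M/L$, then $Z/L$ is a small, hence cosingular, submodule of $M/L$ and also a noncosingular quotient of $Z$, so $Z/L=0$ and $L=Z$); being coclosed in an amply supplemented module, $Z$ is a supplement and hence itself amply supplemented by \cite{clvw}. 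This lets me choose inside $A\cap Z$ a supplement of the relevant submodule so that the kernel of the noncosingular quotient becomes small in its source. The decisive step is then the \cite{tv} fact that a module having a small submodule with noncosingular quotient is itself noncosingular (equivalently, $\overline Z^2$ is unchanged on factoring out a small submodule), which upgrades the noncosingular quotient of $P$ to a genuine nonzero noncosingular submodule of $A$, contradicting $(4)$.

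The hard part is precisely this last upgrade: $\overline Z^2(A)=0$ only forbids noncosingular \emph{submodules} of $A$, while the failure of t-smallness is visible a priori only as a noncosingular \emph{quotient} of a submodule of $A$. Since the torsionfree class $\{\,\overline Z^2(-)=0\,\}$ is not closed under arbitrary quotients, the reduction to a small kernel via amply supplementedness — so that the stability of torsionfree modules under small quotients applies — is indispensable, and this is where I would spend the most care; the elementary directions need none of this machinery. (One may alternatively close the cycle through $(4)\Rightarrow(2)$ by showing directly that the torsionfree submodule $A\cap Z$ of the amply supplemented noncosingular module $Z$ is small in $Z$, which isolates the same obstacle.)
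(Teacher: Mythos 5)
Your chain $(1)\Rightarrow(2)\Rightarrow(3)\Rightarrow(4)$ is correct and essentially coincides with the paper's (the paper also obtains $(3)\Rightarrow(4)$ by squeezing $\overline Z^2(A)$ between ``small, hence cosingular'' and ``noncosingular''). The problem is the closing implication $(4)\Rightarrow(1)$, exactly where you say you would spend the most care: your mechanism does not go through. Your plan is to take $\overline Z^2(M)\leq A+B$ with $\overline Z^2(M)\not\leq B$, produce the nonzero noncosingular quotient $P/(A\cap B)\cong(\overline Z^2(M)+B)/B$ with $P=A\cap(\overline Z^2(M)+B)\leq A$, and then use amply supplementedness to replace the kernel $A\cap B$ by a small one, so that your (correct, and elementarily provable) lemma ``small submodule with noncosingular quotient implies noncosingular'' applies. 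But the object you need is a supplement of $A\cap B$ inside $P$ (some $P'\leq P$ with $P'+(A\cap B)=P$ and $P'\cap(A\cap B)\ll P'$), and none of $P$, $A$, $\overline Z^2(M)+B$ is known to be supplemented: submodules of amply supplemented modules need not be. Moreover the hypothesis $\overline Z^2(M)\leq A+B$ does not localize to a decomposition $\overline Z^2(M)=(A\cap\overline Z^2(M))+W$, so the amply supplementedness of $\overline Z^2(M)$ cannot be used to place the supplement inside $A$. Your parenthetical variant $(4)\Rightarrow(2)$ does work, precisely because there the decomposition $\overline Z^2(M)=(A\cap\overline Z^2(M))+Y$ is given; but then you still owe $(2)\Rightarrow(1)$ or $(3)\Rightarrow(1)$, which is not trivial (one cannot pass from $\overline Z^2(M)\leq A+B$ to $\overline Z^2(M)\leq(A\cap\overline Z^2(M))+B$), so the cycle is not closed either way.

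The paper's $(4)\Rightarrow(1)$ avoids all of this by quoting the quotient formula $\overline Z^2(N/K)=(\overline Z^2(N)+K)/K$ of \cite[Theorem 3.5]{tv} for an \emph{arbitrary} submodule $K$: applied to $A/(A\cap B)$ it gives $\overline Z^2(A/(A\cap B))=0$, hence $\overline Z^2((A+B)/B)=0$, and applied once more together with $\overline Z^2(A+B)=\overline Z^2(M)$ it yields $(\overline Z^2(M)+B)/B=0$, i.e.\ $\overline Z^2(M)\leq B$. That formula is exactly the statement your reduction is trying to re-derive from the small-kernel special case; to repair your argument, cite (or prove) it directly rather than attempting the supplement reduction.
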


\begin{proof} $(1) \Rightarrow (2)$ Let $B\leq \overline Z^2(M)$ and
$(A\cap \overline Z^2(M)) +B=\overline Z^2(M)$. Then $\overline
Z^2(M)\subseteq A+B$. Since $A\ll_t M$, $\overline Z^2(M)\subseteq
B$.  Therefore $B=\overline Z^2(M)$ and so  $A\cap \overline Z^2(M)
\ll \overline Z^2(M)$.

$(2) \Rightarrow (3)$ It is clear.

$(3) \Rightarrow (4)$ $\overline Z^2(A)\subseteq A\cap \overline
Z^2(M)\ll M$, implies that $\overline Z^2(A)\ll M$. Hence $\overline
Z^2(A)$ is cosingular. On the other hand, by \cite[Theorem,
3.5]{tv}, $\overline Z^2(A)$ is noncosingular. Hence $\overline
Z^2(A)=0$.

$(4) \Rightarrow (1)$ Let $\overline Z^2(A)=0$ and $\overline
Z^2(M)\subseteq A+B$ for some submodule $B$ of $M$. By \cite[Theorem
3.5]{tv}, $\overline Z^2(M)=\overline Z^2(A+B)$ and $\overline
Z^2(A/(A\cap B))= (\overline Z^2(A)+(A\cap B))/(A\cap B)$. Since
$\overline Z^2(A)=0,$ $\overline Z^2(A/(A\cap B))=0$. Then
$\overline Z^2((A+B)/B)=0$. Again by \cite[Theorem 3.5]{tv},
$\overline 0=\overline Z^2((A+B)/B)=(\overline
Z^2(A+B)+B)/B=(\overline Z^2(M)+B)/B$ and so $\overline
Z^2(M)\subseteq B$.
\end{proof}

By Proposition \ref{2.2}, every small submodule of an amply
supplemented module $M$ and every supplement to $\overline Z^2(M)$
is t-small.

\begin{defn} {\rm A submodule $C$ of $M$ is called {\em t-coclosed}
in $M $ and denoted by $C\leq_{tcc} M$ if $C/C'\ll_t M/C'$ implies
that $C=C'$.}
\end{defn}
It is obvious that every t-coclosed submodule is coclosed in amply
supplemented modules and if $C$ is a submodule of a noncosingular
module $M$, then $ C$ is t-coclosed in $M$ if and only if $C$ is
coclosed in $M$.

\begin{lem} \label{2.5} Let $M$ be an amply supplemented module. Then:

$(1)$ If $C\leq_{tcc} M$, then $C\leq \overline Z^2(M)$.

$(2)$ $M\leq_{tcc} M$ if and only if $M$ is noncosingular.

$(3)$ If $A\subseteq C$ and $C\leq_{tcc} M$, then $C/A\leq_{tcc}
M/A$.

$(4)$ If $A\subseteq C$, $C/A\leq_{tcc} M/A$ and $A\leq_{tcc} M$,
then $C\leq_{tcc} M$.

$(5)$ If $A\subseteq C$ and $C$ is amply supplemented, then
$A\leq_{tcc} M
 \Leftrightarrow A\leq_{tcc} C$.
\end{lem}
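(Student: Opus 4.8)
The plan is to reduce all five parts to a single characterization of t-coclosed submodules. Working in an amply supplemented module, for any $C' \le C \le M$ the quotient $M/C'$ is again amply supplemented, so Proposition \ref{2.2} applies to the submodule $C/C'$ of $M/C'$ and gives that $C/C' \ll_t M/C'$ if and only if $\overline Z^2(C/C') = 0$. Combining this with the formula $\overline Z^2(C/C') = (\overline Z^2(C) + C')/C'$ from \cite[Theorem 3.5]{tv}, one obtains the working criterion
$$ C/C' \ll_t M/C' \iff \overline Z^2(C) \le C'. \qquad (\star) $$
Feeding $(\star)$ into the definition of t-coclosedness and testing it on the particular submodule $C' = \overline Z^2(C)$ then yields the key equivalence: $C \le_{tcc} M$ if and only if $\overline Z^2(C) = C$, i.e. if and only if $C$ is noncosingular. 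This is the engine that drives every part.

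Parts (1), (2) and (5) are immediate consequences. For (1), if $C \le_{tcc} M$ then $C = \overline Z^2(C) \le \overline Z^2(M)$, using that $\overline Z^2$ is monotone (equivalently, that $\overline Z^2(M)$ is the largest noncosingular submodule of $M$, \cite{tv}). Part (2) is just the case $C = M$ of the equivalence, since $M$ noncosingular means $\overline Z^2(M) = M$. For (5), both $A \le_{tcc} M$ and $A \le_{tcc} C$ translate, via the equivalence applied in the amply supplemented modules $M$ and $C$ respectively, into the same intrinsic condition $\overline Z^2(A) = A$; as this condition does not mention the ambient module, the two are equivalent.

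Parts (3) and (4) are handled through the behaviour of $\overline Z^2$ on quotients, again from \cite[Theorem 3.5]{tv}. For (3), from $C \le_{tcc} M$ we get $\overline Z^2(C) = C$, whence $\overline Z^2(C/A) = (\overline Z^2(C) + A)/A = C/A$; so $C/A$ is noncosingular and, by the equivalence in the amply supplemented module $M/A$, is t-coclosed there. For (4), the hypotheses give $\overline Z^2(A) = A$ and $\overline Z^2(C/A) = C/A$; monotonicity yields $A = \overline Z^2(A) \le \overline Z^2(C)$, while the quotient formula rewrites $\overline Z^2(C/A) = (\overline Z^2(C) + A)/A = C/A$ as $\overline Z^2(C) + A = C$. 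Combining these two facts gives $\overline Z^2(C) = C$, so $C$ is noncosingular and hence t-coclosed in $M$.

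The main obstacle is establishing the criterion $(\star)$ cleanly, which hinges on correctly transporting Proposition \ref{2.2} to the quotient $M/C'$ and on the identity $\overline Z^2(C/C') = (\overline Z^2(C)+C')/C'$; one must check these apply in each ambient module that arises ($M$, $M/A$, and $C$), all of which are amply supplemented by hypothesis or by passage to quotients. The only other input is the monotonicity of $\overline Z^2$ used in (1) and (4); I would cite this from \cite{tv} rather than reprove it, though it also follows directly from the reject description of $\overline Z$.
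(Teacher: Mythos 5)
Your proof is correct, but it is organized differently from the paper's. You first distill Proposition \ref{2.2} and the quotient formula $\overline Z^2(C/C')=(\overline Z^2(C)+C')/C'$ into the single criterion ``$C\leq_{tcc}M$ iff $\overline Z^2(C)=C$'' and then read off all five parts; this criterion is precisely the equivalence $(2)\Leftrightarrow(5)$ of the paper's Proposition \ref{2.6}, and the authors themselves remark after that proposition that parts $(3)$--$(5)$ of Lemma \ref{2.5} follow from it. The paper's own proof of Lemma \ref{2.5} is instead part-by-part: $(1)$ is done by testing t-smallness of $C/(C\cap\overline Z^2(M))$, $(3)$ and $(5)$ use the intrinsic condition $\overline Z^2(\cdot)=0$ from Proposition \ref{2.2} directly, and $(4)$ goes through a longer computation with the submodule $C\cap(A+T)$, which your argument replaces by the two-line observation that $\overline Z^2(A)=A$ and $\overline Z^2(C)+A=C$ force $\overline Z^2(C)=C$. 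Your route is the more economical one, and it is not circular since you derive the characterization straight from Proposition \ref{2.2} and \cite[Theorem 3.5]{tv} rather than from Proposition \ref{2.6}; the only hypotheses you must (and do) flag are that quotients of amply supplemented modules are amply supplemented, so that Proposition \ref{2.2} applies in $M/C'$, $M/A$ and $C$, and the monotonicity of $\overline Z^2$ used in $(1)$ and $(4)$. What the paper's longer treatment buys is independence of Lemma \ref{2.5} from the noncosingularity characterization, which it only establishes afterwards.
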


\begin{proof} $(1)$ We have $C/(C\cap \overline Z^2(M))\cap \overline
Z^2(M/(C\cap \overline Z^2(M)))= C/(C\cap \overline Z^2(M)) \cap
\overline Z^2(M) /(C\cap \overline Z^2(M))=0\ll \overline
Z^2(M/(C\cap \overline Z^2(M))$. By Proposition \ref{2.2}, $C/(C\cap
\overline Z^2(M)) \ll_t M/(C\cap \overline Z^2(M))$. But
$C\leq_{tcc} M$, thus $C=C\cap \overline Z^2(M)$. Hence $C\leq
\overline Z^2(M)$.

$(2)$ Let $M\leq_{tcc} M$. By $(1)$, $M\subseteq \overline Z^2(M)$.
Then $M=\overline  Z^2(M)$. The converse is clear.

$(3)$ Let $C\leq_{tcc} M$. Let $\frac{C/A}{T/A}\ll_t
\frac{M/A}{T/A}$ for some submodule $T/A$ of $M/A$ with $T/A\leq
C/A$. Then $\overline Z^2(C/T)=0$ by Proposition \ref{2.2} and hence
$C/T\ll_t M/T$ by Proposition \ref{2.2} again. Thus $T=C$ since
$C\leq_{tcc} M$.

$(4)$ Let $C/T\ll_t M/T$ for some submodule $T$ of $M$ with $T\leq
C$. By Proposition \ref{2.2}, $\overline Z^2(C/T)=0$. Hence
$\overline Z^2(C)\leq T$ by \cite[Theorem 3.5]{tv}. Now, $\overline
Z^2(\frac{C}{C\cap (A+T)})=\frac{\overline Z^2(C)+[C\cap
(A+T)]}{C\cap (A+T)}=\frac{\overline Z^2(C)+A+(C\cap T)}{C\cap
(A+T)}=0$. Hence $\overline Z^2(\frac{C/A}{[C\cap(A+T)]/A})=0$. By
Proposition \ref{2.2}, $\frac{C/A}{[C\cap(A+T)]/A} \ll_t
\frac{M/A}{[C\cap(A+T)]/A}$. Then $C=C\cap (A+T)$ and so $C=A+T$.
Since $\overline Z^2(C/T)=0$, then $\overline Z^2(A/(A\cap T))=0$.
By Proposition \ref{2.2}, $A/(A\cap T)\ll_t M/(A\cap T)$. So,
$A=A\cap T$ and hence $A\subseteq T$. Thus $C=T$.

$(5)$ By Proposition \ref{2.2}.
\end{proof}

\begin{prop} \label{2.6} Let $C$ be a submodule of an amply
supplemented module $M$. Then the following are equivalent:

$(1)$ There exists a submodule $S$ such that $C$ is minimal with
respect to the property that $\overline Z^2(M) \subseteq C+S$.

$(2)$  $C$ is t-coclosed in $M$.

$(3)$ $C$ is contained in $\overline Z^2(M)$ and $C$ is a coclosed
submodule of $\overline Z^2(M)$.

$(4)$ $C$ is contained in $\overline Z^2(M)$ and $C$ is a coclosed
submodule of $M$.

$(5)$  $C$ is noncosingular.
\end{prop}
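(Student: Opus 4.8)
The plan is to prove the five statements equivalent by a cycle, settling $(1),(2),(4),(5)$ first and then attaching $(3)$. The engine throughout is Proposition \ref{2.2} together with two facts from \cite[Theorem 3.5]{tv} that I will use repeatedly: the quotient formula $\overline Z^2(X/Y)=(\overline Z^2(X)+Y)/Y$ for $Y\subseteq X$, and that $\overline Z^2(M)$ is noncosingular (so $\overline Z^2(\overline Z^2(M))=\overline Z^2(M)$). First I would show $(5)\Rightarrow(2)$: if $C$ is noncosingular then $\overline Z^2(C)=C$, and whenever $C/C'\ll_t M/C'$ Proposition \ref{2.2} forces $\overline Z^2(C/C')=0$; but the quotient formula makes this equal to $C/C'$, so $C=C'$. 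Then $(2)\Rightarrow(4)$ is immediate from Lemma \ref{2.5}$(1)$ (giving $C\subseteq\overline Z^2(M)$) and the remark that a t-coclosed submodule of an amply supplemented module is coclosed. For $(4)\Rightarrow(5)$ I take $C'=\overline Z^2(C)$; Proposition \ref{2.2} gives $C/C'\ll_t M/C'$, and since $C\subseteq\overline Z^2(M)$ the intersection $(C/C')\cap\overline Z^2(M/C')$ equals all of $C/C'$, so Proposition \ref{2.2}$(3)$ \emph{upgrades} this to genuine smallness $C/C'\ll M/C'$. Coclosedness of $C$ in $M$ then forces $C=\overline Z^2(C)$, i.e. $C$ is noncosingular.

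Next I would fold in $(1)$. For $(5)\Rightarrow(1)$, by the above $C$ is coclosed in $M$ and contained in $\overline Z^2(M)$; in the amply supplemented module $M$ a coclosed submodule is a supplement \cite{clvw}, so choose $S$ with $M=C+S$ and $C$ minimal for this equality. Since $\overline Z^2(M)\subseteq M=C+S$, and any $C''\subseteq C$ with $\overline Z^2(M)\subseteq C''+S$ forces $M=C''+S$, minimality of the supplement gives $C''=C$, which is exactly $(1)$. For $(1)\Rightarrow(2)$, suppose $C/C'\ll_t M/C'$; reducing modulo $C'$ and using $\overline Z^2(M)\subseteq C+S$ yields $\overline Z^2(M/C')\subseteq C/C'+\overline S$ with $\overline S=(S+C')/C'$, and t-smallness of $C/C'$ then gives $\overline Z^2(M/C')\subseteq\overline S$, i.e. $\overline Z^2(M)\subseteq C'+S$. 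Minimality of $C$ now forces $C=C'$, so $C$ is t-coclosed. At this point $(1),(2),(4),(5)$ are all equivalent.

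Finally I would attach $(3)$. The direction $(4)\Rightarrow(3)$ is easy: coclosedness in $M$ passes to the intermediate module $\overline Z^2(M)$, because smallness is preserved when one enlarges the ambient module, so any witness to non-coclosedness in $\overline Z^2(M)$ would already witness it in $M$. The reverse $(3)\Rightarrow(4)$ is where the \emph{main obstacle} lies, and it is genuinely delicate: smallness, and hence coclosedness, does \emph{not} restrict from $M$ down to the smaller module $\overline Z^2(M)$ (a nonzero submodule can be small in the big module while failing to be small in an intermediate one), so coclosedness cannot be transferred between the two modules by naive containment arguments. The resolution is the single structural observation that $\overline Z^2(M)$ is itself coclosed in $M$: being noncosingular, it is t-coclosed in $M$ by $(5)\Rightarrow(2)$ and therefore coclosed in $M$ by $(2)\Rightarrow(4)$. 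With this in hand, transitivity of coclosure \cite{clvw} (if $C$ is coclosed in $\overline Z^2(M)$ and $\overline Z^2(M)$ is coclosed in $M$, then $C$ is coclosed in $M$) together with $C\subseteq\overline Z^2(M)$ gives $(4)$, closing the cycle. The only real care needed is to verify these transfers are applied in the legitimate (upward) direction, the downward transfers being available only through the coclosedness of $\overline Z^2(M)$ in $M$.
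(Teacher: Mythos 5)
Your argument is correct; every step checks out, including the two places that need care: the upgrade from t-smallness to genuine smallness in $(4)\Rightarrow(5)$ (legitimate because $C\subseteq\overline Z^2(M)$ makes $(C/C')\cap\overline Z^2(M/C')=C/C'$, so Proposition \ref{2.2}$(1)\Rightarrow(3)$ applies), and the observation that $(3)\Rightarrow(4)$ genuinely requires $\overline Z^2(M)$ to be coclosed in $M$ before transitivity of coclosure can be invoked. The overall toolkit is the same as the paper's (Proposition \ref{2.2}, the quotient formula $\overline Z^2(X/Y)=(\overline Z^2(X)+Y)/Y$ from \cite{tv}, coclosed $=$ supplement in amply supplemented modules, and transitivity of coclosure from \cite{clvw}), but your route differs in organization and is somewhat more self-contained: the paper funnels everything through statement $(3)$, disposing of $(3)\Leftrightarrow(5)$ and of the coclosedness of $\overline Z^2(M)$ in $M$ by citing \cite{tv} (Lemma 2.3(3) and Corollary 3.4), whereas you establish the cycle $(5)\Rightarrow(2)\Rightarrow(4)\Rightarrow(5)$ directly from Proposition \ref{2.2} and then recover the coclosedness of $\overline Z^2(M)$ internally by applying your own $(5)\Rightarrow(2)\Rightarrow(4)$ to the noncosingular submodule $\overline Z^2(M)$ itself. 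A second small divergence: for $(1)$ the paper takes a supplement of $C$ \emph{inside} $\overline Z^2(M)$ (which tacitly needs $\overline Z^2(M)$ to inherit ample supplementation), while you take the supplement in $M$ and use $C\subseteq\overline Z^2(M)$ to see that $\overline Z^2(M)\subseteq C''+S$ already forces $M=C''+S$; your version avoids that tacit step. The $(1)\Rightarrow(2)$ argument is identical in both. What your arrangement buys is fewer external citations to \cite{tv}; what the paper's buys is brevity, since $(3)\Leftrightarrow(5)$ is a one-line reference there.
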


\begin{proof} $(1) \Rightarrow (2)$ Let $(1)$ hold and $C/C'\ll_t
M/C'$. Then $\overline Z^2(M) \subseteq C+C'+S$. Then $\overline
Z^2(M/C')= (\overline Z^2(M)+C')/C'\subseteq C/C'+(C'+S)/C'$. Since
$C/C'\ll_t M/C'$, $(\overline Z^2(M)+C')/C'\subseteq (C'+S)/C'$ and
so $\overline Z^2(M) \subseteq C'+S$. Hence $C=C'$.

$(2) \Rightarrow (3)$ By Lemma \ref{2.5}, $C$ is contained in
$\overline Z^2(M)$. Let $C/C'\ll \overline Z^2(M)/C'$. Then
$C/C'\cap \overline Z^2(M)/C'=C/C'\ll \overline Z^2(M)/C=\overline
Z^2(M/C)$. By Proposition \ref{2.2}, $C/C'\ll_t M/C'$. By
hypothesis, $C=C'$.

$(3) \Rightarrow (4)$ By \cite[Corollary 3.4]{tv},  $\overline
Z^2(M)$ is coclosed in $M$. By \cite[3.7(6)]{clvw},  $C$ is coclosed
in $M$.

$(4) \Rightarrow (3)$ By \cite[3.7(6)]{clvw}.

$(3) \Leftrightarrow (5)$ By \cite[Lemma 2.3(3) and Corollary
3.4]{tv}.

$(3) \Rightarrow (1)$ Let $C$ be  a coclosed submodule of $
\overline Z^2(M)$. Then $C$ is
 supplement in $\overline Z^2(M)$. Now, there exists a submodule $S$
 of $M$ such that $\overline Z^2(M) = C+S$ and $C$ is minimal with
$\overline Z^2(M) = C+S$. For any submodule $X$ of $M$ with
$X\subseteq C$, let $\overline Z^2(M) \subseteq X+S$. Then by
\cite[Theorem 3.5]{tv}, $\overline Z^2(M) = \overline Z^2(X+S)$.
Hence $C+S=\overline Z^2(M)=X+S$. By minimality of $C$ in $\overline
Z^2(M)$, $X=C$.
\end{proof}

Note that the conditions $(3)-(5)$ of Lemma \ref{2.5} are satisfied
from Proposition \ref{2.6}, as well.

\begin{cor} Let $M$ be an amply supplemented module. Then:

$(1)$ $\overline Z^2(M)$ is t-coclosed in $M$.

$(2) $ If $\phi$ is an endomorphism of $M$ and $C$ is a t-coclosed
submodule of $M$, then $\phi(C)$ is t-coclosed in $M$.
\end{cor}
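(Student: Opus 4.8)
The plan is to reduce both parts to Proposition \ref{2.6}, whose equivalence of t-coclosedness with noncosingularity (condition $(5)$) makes the statements almost immediate.

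For part $(1)$, I would observe that $\overline Z^2(M)$ is itself a noncosingular module. This is exactly the fact already invoked in the proof of Proposition \ref{2.2}, $(3)\Rightarrow(4)$: by \cite[Theorem 3.5]{tv}, $\overline Z^2(A)$ is noncosingular for every module $A$, and in particular $\overline Z^2(M)$ is noncosingular. Since $\overline Z^2(M)$ is a submodule of the amply supplemented module $M$, the implication $(5)\Rightarrow(2)$ of Proposition \ref{2.6} then gives at once that $\overline Z^2(M)$ is t-coclosed in $M$. (Alternatively one may apply $(3)\Rightarrow(2)$: $\overline Z^2(M)$ is trivially contained in $\overline Z^2(M)$ and is coclosed in itself, since any module is coclosed in itself.)

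For part $(2)$, I would again route everything through Proposition \ref{2.6}. Since $C\leq_{tcc}M$, condition $(5)$ tells us $C$ is noncosingular. The key step is that $\phi(C)$, being the homomorphic image of $C$ under $\phi$, inherits noncosingularity: if $\overline Z(C)=C$, then for every small module $L$ and every $g\in\Hom(\phi(C),L)$ the composite $g\,\phi|_C\colon C\to L$ must vanish because $C$ is noncosingular, whence $g=0$ since $\phi|_C\colon C\to\phi(C)$ is surjective; thus $\overline Z(\phi(C))=\phi(C)$. Now $\phi(C)$ is a submodule of the amply supplemented module $M$ and is noncosingular, so $(5)\Rightarrow(2)$ of Proposition \ref{2.6} shows that $\phi(C)$ is t-coclosed in $M$.

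The only point requiring care—and really the single substantive ingredient—is the closure of the class of noncosingular modules under homomorphic images; everything else is a direct appeal to Proposition \ref{2.6}. I do not expect a genuine obstacle here, since this closure property is an immediate consequence of the reject description $\overline Z(N)=\Rej(N,{\cal S})$ recorded in the introduction, and is in any case available from \cite{tv}.
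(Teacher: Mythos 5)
Your proposal is correct and follows essentially the same route as the paper: both parts are reduced via Proposition \ref{2.6} to the equivalence of t-coclosedness with noncosingularity, using that $\overline Z^2(M)$ is noncosingular for $(1)$ and that noncosingularity passes to homomorphic images for $(2)$. The paper's proof is just a terser version of yours, and your explicit verification of the closure of noncosingular modules under images is a correct filling-in of what the paper leaves implicit.
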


\begin{proof} $(1)$ Since $\overline Z^2(M) $ is noncosingular,
$\overline Z^2(M)$ is t-coclosed in $M$ by Proposition \ref{2.6}.

$(2)$ Since $C$ is noncosingular, $\phi(C)$ is noncosingular. Thus
$\phi(C)$ is t-coclosed.
\end{proof}

The  sum of two coclosed submodules need not be coclosed (see
\cite[21.5]{clvw}), but this term is always true if we replace
coclosed with t-coclosed, as the following proposition shows.

\begin{cor} \label{2.8}  Let $M$ be an amply supplemented module.
Then an arbitrary sum of t-coclosed submodules of $M$ is t-coclosed.
\end{cor}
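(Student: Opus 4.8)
The plan is to reduce the statement entirely to Proposition~\ref{2.6}, which identifies the t-coclosed submodules of an amply supplemented module $M$ as exactly its noncosingular submodules (the equivalence of $(2)$ and $(5)$). Once this identification is made, the corollary becomes the assertion that an arbitrary sum of noncosingular submodules of $M$ is again noncosingular, a statement about the operator $\overline Z$ rather than about coclosedness directly. This is precisely why coclosedness fails to be preserved under sums while t-coclosedness succeeds: noncosingularity, unlike coclosedness, behaves well under arbitrary sums.

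First I would let $\{C_i\}_{i\in I}$ be a family of t-coclosed submodules of $M$ and set $C=\sum_{i\in I}C_i$. By Proposition~\ref{2.6} (applied to each $C_i$ as a submodule of the amply supplemented module $M$), every $C_i$ is noncosingular, i.e. $\overline Z(C_i)=C_i$. The key step is to translate noncosingularity into a vanishing condition on homomorphisms: since $\overline Z(N)=\Rej(N,\mathcal S)=\bigcap\{\Ke g \mid g\in\Hom(N,L),\ L\in\mathcal S\}$, a module $N$ is noncosingular precisely when $\Hom(N,L)=0$ for every small module $L$.

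Next I would verify that $C$ is noncosingular using this criterion. Given any small module $L$ and any $g\in\Hom(C,L)$, each restriction $g|_{C_i}$ lies in $\Hom(C_i,L)$, which is zero because $C_i$ is noncosingular and $L$ is small; hence $g$ vanishes on every $C_i$. As $C$ is generated by the $C_i$ and $g$ is additive, $g=0$, so $\Hom(C,L)=0$ for all small $L$ and therefore $\overline Z(C)=C$. Applying Proposition~\ref{2.6} once more, now to the submodule $C$ of $M$, shows that $C$ is t-coclosed, completing the argument.

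The argument carries no serious obstacle; the only point requiring care is the Hom-theoretic reformulation of noncosingularity, which is what makes the passage to an arbitrary (possibly infinite) sum immediate, since each summand annihilates every map into a small module and such maps are detected on the generating summands. I should also confirm that Proposition~\ref{2.6} applies to $C$ itself: it does, because the proposition concerns arbitrary submodules of the fixed amply supplemented module $M$, and at no stage do I need $C$ to be amply supplemented in its own right.
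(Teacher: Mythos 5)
Your argument is correct and is essentially the paper's own proof: the authors also reduce via Proposition~\ref{2.6} to the statement that an arbitrary sum of noncosingular submodules is noncosingular, which they assert without detail and which you justify correctly via the characterization $\overline Z(N)=N \Leftrightarrow \Hom(N,L)=0$ for all small $L$.
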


\begin{proof} Since arbitrary sum of noncosingular submodules is
noncosingular, it is clear.
\end{proof}

\begin{defn} {\rm A module $M$ is called \emph{t-lifting} if  every
submodule $A$ of $M$ contains a direct summand $B$ of $M$ such that
$A/B\ll_t M/B$.}
\end{defn}

The next result gives us several equivalent conditions for a
t-lifting amply supplemented module.

\begin{thm} \label{2.11} Let $M$ be an amply supplemented module.
Then the following are equivalent:

$(1)$ $M$ is t-lifting.

$(2)$ For every submodule $A$ of $M$, there exists a decomposition
$A=N\oplus N'$ such that $N$ is a direct summand of $M$ and
$N'\ll_t M$.

$(3)$  Every t-coclosed submodule of $M$ is a direct summand.

 $(4)$ For every submodule $A$ of $M$, $\overline Z^2(A)$ is a direct
summand of $M$.

$(5)$ For every coclosed submodule $A$ of $M$, $\overline Z^2(A)$
is a direct summand of $M$.

$(6)$ $\overline Z^2(M) $ is a direct summand of $M$ and
$\overline Z^2(M) $ is lifting.

$(7)$ Every submodule $A$ of $M$ which is contained in $\overline
Z^2(M) $, contains a direct summand $N$ of $M$ such that $A/N\ll
M/N$.
\end{thm}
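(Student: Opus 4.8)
The plan is to prove the seven conditions equivalent by establishing a cycle of implications, supplemented by a few direct cross-links, exploiting heavily the characterization of t-coclosed submodules from Proposition~\ref{2.6} (namely that a t-coclosed submodule is precisely a noncosingular submodule contained in $\overline Z^2(M)$, equivalently a coclosed submodule of $\overline Z^2(M)$). The natural backbone is $(1)\Rightarrow(2)\Rightarrow(3)\Rightarrow(6)\Rightarrow(1)$, with $(4)$, $(5)$ and $(7)$ attached to this loop. First I would handle $(1)\Rightarrow(2)$: given $A\leq M$, t-lifting yields a summand $B\leq A$ with $A/B\ll_t M/B$; writing $M=B\oplus B'$ and applying the modular law to $A=B\oplus(A\cap B')$, I set $N=B$ and $N'=A\cap B'$ and must check $N'\ll_t M$, which follows because $A/B\cong N'$ is t-small in the appropriate quotient and t-smallness transfers via Proposition~\ref{2.2} (equivalently $\overline Z^2(N')=0$).

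Next, $(2)\Rightarrow(3)$: let $C\leq_{tcc}M$. By Proposition~\ref{2.6}, $C$ is noncosingular, so $\overline Z^2(C)=C$. Decomposing $C=N\oplus N'$ with $N$ a summand of $M$ and $N'\ll_t M$, Proposition~\ref{2.2} gives $\overline Z^2(N')=0$; but $N'$ is a direct summand of the noncosingular module $C$, hence noncosingular, forcing $N'=0$ and $C=N$ a summand. For $(3)\Rightarrow(6)$, the key observations are that $\overline Z^2(M)$ is itself t-coclosed (by the Corollary following Proposition~\ref{2.6}), hence a summand by $(3)$; and that for any coclosed submodule $C$ of $\overline Z^2(M)$, $C$ is t-coclosed in $M$ (Proposition~\ref{2.6}(3)) and thus a summand of $M$, which sits inside $\overline Z^2(M)$ and is therefore a summand of $\overline Z^2(M)$. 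Since $\overline Z^2(M)$ is amply supplemented (being a summand of the amply supplemented $M$), the characterization that every coclosed submodule is a summand makes $\overline Z^2(M)$ lifting by \cite[22.3]{clvw}.

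To close the loop I would show $(6)\Rightarrow(1)$: write $M=\overline Z^2(M)\oplus K$ and take any $A\leq M$. Projecting $A$ into $\overline Z^2(M)$ and using that $\overline Z^2(M)$ is lifting to extract a summand $N$ of $\overline Z^2(M)$ (hence of $M$) with the appropriate small quotient, I then upgrade smallness in $\overline Z^2(M)/N$ to t-smallness in $M/N$ via Proposition~\ref{2.2}. Finally I would attach the remaining conditions: $(7)\Rightarrow(4)$ is immediate by taking $A=\overline Z^2(A')$ and noting $\overline Z^2$ of it is noncosingular so the small quotient vanishes; $(4)\Rightarrow(5)$ is trivial specialization; $(5)\Rightarrow(3)$ uses that a t-coclosed $C$ equals $\overline Z^2(C)$ and is coclosed, so applying $(5)$ to $C$ itself (or to its coclosure in $M$) gives $C$ as a summand; and $(6)\Rightarrow(7)$ follows since any $A\leq\overline Z^2(M)$ is handled by liftingness of $\overline Z^2(M)$ directly. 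The main obstacle I anticipate is the bookkeeping in $(6)\Rightarrow(1)$ and in the $(7)$-linkages, where one must pass an ordinary smallness relation inside the summand $\overline Z^2(M)$ to a genuine t-smallness relation inside $M$; this is exactly where Proposition~\ref{2.2}'s equivalence of $A\ll_t M$ with $\overline Z^2(A)=0$ does the decisive work, and care is needed to ensure the chosen summand $N$ really is a summand of $M$ and not merely of $\overline Z^2(M)$ before invoking that translation.
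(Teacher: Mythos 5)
Your overall plan is sound and, modulo the rearrangement of the cycle, it is essentially the paper's own argument: the paper proves $(1)\Rightarrow(2)\Rightarrow(3)\Rightarrow(4)\Rightarrow(5)\Rightarrow(6)\Rightarrow(7)\Rightarrow(1)$, and all of your individual links --- $(1)\Rightarrow(2)$ via the modular law, $(2)\Rightarrow(3)$ via noncosingularity of $C$ (the paper instead invokes t-coclosedness of $C$ directly, but your variant is equally valid), $(3)\Rightarrow(6)$ through t-coclosedness of $\overline Z^2(M)$ and of coclosed submodules of $\overline Z^2(M)$, and the cross-links $(7)\Rightarrow(4)$, $(4)\Rightarrow(5)$, $(5)\Rightarrow(3)$, $(6)\Rightarrow(7)$ --- are correct and rest on Propositions \ref{2.2} and \ref{2.6} exactly as the paper's do.

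There is, however, one genuine gap, in the step you yourself flagged as delicate: in $(6)\Rightarrow(1)$ you propose to take $A\leq M$, \emph{project} $A$ into $\overline Z^2(M)$, and apply the lifting property of $\overline Z^2(M)$ to the projection $\pi(A)$. This fails: the summand $N$ so obtained satisfies $N\subseteq\pi(A)$, which does not imply $N\subseteq A$, so $A/N$ need not even be defined and t-lifting is not witnessed. The correct move --- and the one the paper makes in its $(7)\Rightarrow(1)$ step --- is to apply the hypothesis to the \emph{intersection} $A\cap\overline Z^2(M)$ instead: one obtains $N\leq^\oplus\overline Z^2(M)\leq^\oplus M$ with $N\subseteq A\cap\overline Z^2(M)\subseteq A$ and $(A\cap\overline Z^2(M))/N\ll M/N$. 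Then, using $\overline Z^2(M/N)=(\overline Z^2(M)+N)/N$ and the modular law (valid since $N\subseteq A$), one gets $(A/N)\cap\overline Z^2(M/N)=(A\cap\overline Z^2(M))/N\ll M/N$, whence $A/N\ll_t M/N$ by Proposition \ref{2.2}. Also note that the worry you actually raise --- whether $N$ is a summand of $M$ rather than merely of $\overline Z^2(M)$ --- is the harmless one (summands of summands are summands); the containment $N\subseteq A$ is the issue that needs the intersection rather than the projection.
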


\begin{proof} $(1) \Rightarrow (2)$ Let $A\leq M$. Then there exists
a decomposition $M=N\oplus L$ such that $A/N\ll_t M/N$. Then
$A=N\oplus (L\cap A)$. By Proposition \ref{2.2}, $\overline
Z^2(A/N)=0$ and so $\overline Z^2(L\cap A)=0$. Again by Proposition
\ref{2.2}, $L\cap A\ll_t M$.

 $(2) \Rightarrow (3)$ Let $C$ be a t-coclosed submodule of $M$. By
 assumption, $C=N\oplus N'$ such that $N\leq^\oplus M$ and $N'\ll_t
 M $. By Proposition \ref{2.2}, $\overline Z^2(N')=0$, thus
 $\overline Z^2(C/N)=0$. Again by Proposition \ref{2.2}, $C/N\ll_t
 M/N$. Since $C$ is t-coclosed, $C=N$ is a direct summand of $M$.

 $(3) \Rightarrow (4)$ Since $\overline Z^2(A)$ is noncosingular, by
 Proposition \ref{2.6}, $\overline Z^2(A)$ is t-coclosed in $M$ and
 so $\overline Z^2(A)$ is a direct summand of $M$.

$(4) \Rightarrow (5)$ It is clear.

$(5) \Rightarrow (6)$  Since $\overline Z^2(M)$ is coclosed in
$M$,
 $\overline Z^2(\overline Z^2(M))=\overline Z^2(M)$ is  a direct
summand of $M$. Now, let $C$ be a coclosed submodule of $\overline
Z^2(M)$. Thus, by \cite[Lemma 2.3]{tv}, $C$ is noncosingular. Hence
$\overline Z^2(C)=C$ and so $C$ is a direct summand of $M$.
Therefore  $C$ is a direct summand of $\overline Z^2(M)$.

$(6) \Rightarrow (7)$ Let $A\leq \overline Z^2(M)$. Then there
exists a direct summand $N$ of $\overline Z^2(M)$ such that $A/N\ll
\overline Z^2(M)/N$. Thus $A/N\ll  M/N$. It is clear that
$N\leq^\oplus M$.

$(7) \Rightarrow (1)$ Let $A\leq M$. By hypothesis, there exists a
direct summand $N$ of $M$ such that $(A\cap \overline Z^2(M))/N\ll
M/N$. By Proposition \ref{2.2}, $A/N\ll_t M/N$. Therefore $M$ is
t-lifting.
\end{proof}
It is clear that if $\overline Z^2(M)=0$, then $M$ is t-lifting,
where $M$ is amply supplemented. Every lifting module is t-lifting
since every t-coclosed submodule is coclosed in any amply
supplemented module.

\begin{exam}\label{first} {\rm $(1)$ It is well known that the
$\mathbb{Z}$-module $M=\mathbb{Z}/p\mathbb{Z}\oplus
\mathbb{Z}/p^2\mathbb{Z}$ is lifting, where $p$ is any prime. So $M$
is t-lifting.

$(2)$ It is well known that the $\mathbb{Z}$-module
$M=\mathbb{Z}/p\mathbb{Z}\oplus \mathbb{Z}/p^3\mathbb{Z}$ is not
lifting, but it is amply supplemented. Let $A$ be a t-coclosed
submodule of $M$. By Proposition \ref{2.5}, $A$ is noncosingular. On
the other hand, $A$ is cosingular since $M$ is cosingular. Thus
$A=0$, and hence it is a direct summand of $M$. Thus $M$ is
t-lifting by Theorem \ref{2.11}.}
\end{exam}

\begin{prop} Let $M$ be a t-lifting amply supplemented module. Then:

$(1)$ Every amply supplemented submodule of $M$ is t-lifting.

$(2)$ For every fully invariant submodule $L$ of $M$, $M/L$ is
t-lifting.
\end{prop}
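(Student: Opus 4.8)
The plan is to reduce both parts to the internal characterizations of t-lifting modules in Theorem \ref{2.11}, exploiting the fact (Proposition \ref{2.6}) that a submodule of an amply supplemented module is t-coclosed precisely when it is noncosingular, a property intrinsic to the submodule and hence insensitive to the ambient module.

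For $(1)$, let $N$ be an amply supplemented submodule of $M$. By the equivalence $(1)\Leftrightarrow(3)$ of Theorem \ref{2.11} applied to $N$, it suffices to show that every t-coclosed submodule $A$ of $N$ is a direct summand of $N$. First I would transfer t-coclosedness between $N$ and $M$: since $A\subseteq N$ and $N$ is amply supplemented, Lemma \ref{2.5}$(5)$ gives $A\leq_{tcc}N\Leftrightarrow A\leq_{tcc}M$ (both being equivalent, via Proposition \ref{2.6}, to $A$ noncosingular). Since $M$ is t-lifting and amply supplemented, Theorem \ref{2.11}$(3)$ yields that $A$ is a direct summand of $M$, say $M=A\oplus K$. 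As $A\subseteq N$, the modular law gives $N=A\oplus(N\cap K)$, so $A$ is a direct summand of $N$ and $N$ is t-lifting. This part I expect to be routine.

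For $(2)$, let $L$ be a fully invariant submodule of $M$. Note first that $M/L$ is amply supplemented, being a factor module of an amply supplemented module. I would verify t-liftingness of $M/L$ through the equivalence $(1)\Leftrightarrow(6)$ of Theorem \ref{2.11}, i.e. by showing $\overline Z^2(M/L)$ is a direct summand of $M/L$ and is lifting. Since $M$ is t-lifting, Theorem \ref{2.11}$(6)$ gives $M=\overline Z^2(M)\oplus K$ with $Z:=\overline Z^2(M)$ lifting. Because $L$ is fully invariant it splits along this decomposition, $L=L_1\oplus L_2$ with $L_1=L\cap Z$ and $L_2=L\cap K$, whence $M/L\cong Z/L_1\oplus K/L_2$. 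By \cite[Theorem 3.5]{tv} one has $\overline Z^2(M/L)=(\overline Z^2(M)+L)/L=(Z+L)/L$, and a short computation identifies this with the first summand $Z/L_1$ of $M/L\cong Z/L_1\oplus K/L_2$; in particular $\overline Z^2(M/L)$ is a direct summand of $M/L$.

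It remains to see that $\overline Z^2(M/L)\cong Z/L_1$ is lifting, and this is the step I expect to be the main obstacle. Here I would use that $L_1=L\cap Z$ is fully invariant in $Z$ (which follows since $Z$ is a direct summand of $M$, so every $\psi\in\End(Z)$ extends to $\End(M)$ and hence preserves $L$), together with the fact that a factor module of a lifting module by a fully invariant submodule is again lifting. This last fact can be proved directly: given $A/L_1\leq Z/L_1$, lift via the lifting property of $Z$ to a summand $B$ of $Z$ with $B\subseteq A$ and $A/B\ll Z/B$; full invariance of $L_1$ splits it across a complement of $B$, making $(B+L_1)/L_1$ a direct summand of $Z/L_1$ contained in $A/L_1$, while the image of the small submodule $A/B$ under the canonical epimorphism $Z/B\to(Z/L_1)/((B+L_1)/L_1)$ remains small. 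With $\overline Z^2(M/L)$ a lifting direct summand, Theorem \ref{2.11}$(6)$ gives that $M/L$ is t-lifting. The only delicate points are the correct identification of $\overline Z^2(M/L)$ through \cite[Theorem 3.5]{tv} and the full-invariance bookkeeping underlying the quotient-of-lifting lemma.
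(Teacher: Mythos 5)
Your proposal is correct, but it takes a noticeably different route from the paper's, particularly in part $(2)$. For $(1)$ the paper argues straight from the definition: given $L\leq A$, the direct summand $N$ of $M$ with $N\subseteq L$ and $L/N\ll_t M/N$ produced by t-liftingness of $M$ is automatically a direct summand of $A$ (since $N\subseteq L\subseteq A$), and $L/N\ll_t A/N$ follows from the intrinsic criterion $\overline Z^2(L/N)=0$ of Proposition \ref{2.2}; your detour through Theorem \ref{2.11}$(3)$, Lemma \ref{2.5}$(5)$ and the modular law proves the same thing with the same underlying observation (t-smallness and t-coclosedness are intrinsic), so the difference there is mostly cosmetic. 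For $(2)$ the divergence is real: the paper again works directly, lifting a given $K\supseteq L$ to $M=N\oplus N'$ with $N\subseteq K$, splitting $L=(L\cap N)\oplus(L\cap N')$ by full invariance so that $(N+L)/L$ is a summand of $M/L$, and verifying $K/(N+L)\ll_t M/(N+L)$ from $\overline Z^2(K)\leq N$; you instead verify condition $(6)$ of Theorem \ref{2.11}, identifying $\overline Z^2(M/L)$ with $\overline Z^2(M)/(L\cap\overline Z^2(M))$ and reducing to the auxiliary fact that a lifting module modulo a fully invariant submodule is lifting, which is not stated in the paper and which you therefore must (and do, correctly) sketch. The paper's version is shorter and entirely self-contained in its own lemmas; yours isolates a cleaner structural picture and makes the role of full invariance more transparent, at the price of an extra lemma and of the (standard, but tacitly used) fact that factor modules of amply supplemented modules are amply supplemented --- a fact the paper's own argument also relies on implicitly when it applies Proposition \ref{2.2} to $M/(N+L)$.
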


\begin{proof} $(1)$ Let $A\leq M$ and $A$ be amply supplemented. Let
$L\leq A$. Since $M$ is t-lifting, there exists a direct summand $N$
of $M$ such that $N\subseteq L$ and $L/N\ll_t M/N$. Then $N$ is a
direct summand of $A$ and by Proposition \ref{2.2}, $L/N\ll_t A/N$.
 Hence $A$ is t-lifting.

 $(2)$ Let $L$ be a fully invariant submodule of $M$. Let $K/L\leq
 M/L$. Since $M$ is t-lifting, $M=N\oplus N'$, $N\subseteq K$ and
 $K/N\ll_t M/N$ for some submodule $N'$ of $M$. Note that $L=(N\cap
 L)\oplus (N'\cap L)= (N+L)\cap (N'+L)$ since $L$ is fully invariant in $M$. Hence $M/L=((N+L)/L)
 \oplus ((N'+L)/L)$. By Proposition \ref{2.2}, $\overline Z^2(K)\leq
 N$. Then $\overline Z^2(K/(N+L))=0$. Again by Proposition
 \ref{2.2}, $K/(N+L)\ll_t M/(N+L)$. Hence $M/L$ is t-lifting.
 \end{proof}

 \section{t-Dual Baer Modules}

\begin{defn} {\rm A module $M$ is said to be {\em t-dual Baer} if
$I( \overline Z^2(M))$ is a direct summand of $M$ for every right
ideal $I$ of $S$, where $S=\End(M)$.}
\end{defn}
It is clear that for a noncosingular module $M$, we have $M$ is
dual Baer if and only if it is t-dual Baer.

Recall that a module $M$ is said to have \emph{strongly summand sum
property} if the sum of every number of direct summand of $M$ is a
direct summand of $M$.

\begin{thm} \label{3.2}  Let $M$ be a module with $S=\End(M)$. Then the following are equivalent:

$(1)$ $M$ is t-dual Baer.

$(2)$ $\overline Z^2(M) $ is a direct summand of $M$ and
$\overline Z^2(M)$ is a dual Baer module.

$(3)$ $M$ has the strongly summand sum property for direct
summands which are contained in $\overline Z^2(M)$ and
$\phi(\overline Z^2(M))$ is a direct summand of $M$ for every
$\phi \in S$.

$(4)$ $\sum_{\phi \in A}\phi(\overline Z^2(M))$ is a direct summand
of $M$ for every subset $A$ of $S$.
\end{thm}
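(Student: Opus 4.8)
The plan is to prove the cycle $(1)\Rightarrow(2)\Rightarrow(3)\Rightarrow(4)\Rightarrow(1)$. Throughout I write $Z=\overline Z^2(M)$ and use the fact from \cite{tv} that $\overline Z^2$ is a preradical, so $\phi(Z)\subseteq Z$ for every $\phi\in S$; that is, $Z$ is fully invariant in $M$. Two elementary observations will be used repeatedly. First, if $Z\leq^\op M$ and $K\leq^\op M$ with $K\subseteq Z$, then $K\leq^\op Z$: writing $M=K\oplus K'$ and applying the modular law gives $Z=K\oplus(Z\cap K')$. Second, for a right ideal $I$ of $S$ one has $I(Z)=\sum_{\phi\in I}\phi(Z)$, and more generally full invariance gives $\phi s(Z)\subseteq\phi(Z)$ for $s\in S$, so images of $Z$ under a generated right ideal reduce to images under its generators.

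For $(1)\Rightarrow(2)$: taking $I=S$ yields $S(Z)=\sum_{\phi\in S}\phi(Z)$, which equals $Z$ by full invariance (the identity contributes $Z$ itself), so $Z\leq^\op M$. To show $Z$ is dual Baer, fix a right ideal $J$ of $T=\End(Z)$. Using $Z\leq^\op M$, extend each $\psi\in J$ to $\tilde\psi=\psi e\in S$, where $e\colon M\to Z$ is the projection; then $\tilde\psi(Z)=\Img\psi$. Let $I$ be the right ideal of $S$ generated by $\{\tilde\psi : \psi\in J\}$. A short computation using full invariance gives $I(Z)=\sum_{\psi\in J}\Img\psi$. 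By hypothesis $I(Z)\leq^\op M$, and since $I(Z)\subseteq Z$, the modular-law observation yields $\sum_{\psi\in J}\Img\psi\leq^\op Z$. Hence $Z$ is dual Baer. I expect this passage between right ideals of $\End(M)$ and of $\End(Z)$ to be the main obstacle, since it is where the extension construction and full invariance must be combined carefully to identify $I(Z)$ with $\sum_{\psi\in J}\Img\psi$.

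For $(2)\Rightarrow(3)$: since $Z$ is fully invariant, for each $\phi\in S$ the restriction $\phi|_Z$ lies in $\End(Z)$, so $\phi(Z)=\Img(\phi|_Z)$ is a direct summand of $Z$ by dual Baerness applied to the principal right ideal $\phi|_Z\,\End(Z)$, hence a direct summand of $M$. For the strong summand sum property, let $\{K_i\}$ be direct summands of $M$ with each $K_i\subseteq Z$; then each $K_i\leq^\op Z$, so $K_i=\Img e_i$ for idempotents $e_i\in\End(Z)$, and $\sum_iK_i=\sum_{\psi\in J}\Img\psi$ for the right ideal $J=\sum_i e_i\,\End(Z)$, which is a direct summand of $Z$ by dual Baerness, hence of $M$.

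For $(3)\Rightarrow(4)$: given any subset $A\subseteq S$, each $\phi(Z)$ is a direct summand of $M$ contained in $Z$ by the hypotheses of $(3)$ together with full invariance, so $\sum_{\phi\in A}\phi(Z)$ is a direct summand of $M$ by the strong summand sum property. Finally $(4)\Rightarrow(1)$ is immediate, since any right ideal $I$ of $S$ is in particular a subset and $I(Z)=\sum_{\phi\in I}\phi(Z)$.
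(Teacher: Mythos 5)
Your proof is correct and follows essentially the same route as the paper's: both hinge on the full invariance of $\overline Z^2(M)$ and on translating right ideals between $\End(M)$ and $\End(\overline Z^2(M))$ via the projection and inclusion maps, together with the modular law to pass direct summands between $M$ and $\overline Z^2(M)$. The only difference is organizational --- you derive $(3)$ from $(2)$ using idempotents of $\End(\overline Z^2(M))$, whereas the paper derives $(3)$ directly from $(1)$ using idempotents of $\End(M)$ whose images lie in $\overline Z^2(M)$ --- but the substance is the same.
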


\begin{proof} $(1)\Rightarrow (2)$ Since $M$ is t-dual Baer,
$\overline Z^2(M)=S(\overline Z^2(M))$ is a direct summand of $M$.
Let $I$ be a right ideal of $\overline S=\End(\overline Z^2(M))$,
$A=\{i\phi \pi \ | \ \phi\in I \}$ where $\pi$ is the canonical
projection onto $\overline Z^2(M)$, $i$ is the inclusion map from
$\overline Z^2(M)$ to $M$ and $I'=AS$. It is clear that
$I(\overline Z^2(M))=I'(\overline Z^2(M))$. Since $M$ is t-dual
Baer, $I'\overline Z^2(M)$ is a direct summand of $M$. Thus
$I\overline Z^2(M)$ is a direct summand of $\overline Z^2(M)$.
Therefore $\overline Z^2(M)$ is dual Baer.

$(2)\Rightarrow (1)$  Let $I$ be a right ideal of $S$, $A'= \{
\pi'\phi|_{\overline Z^2(M)} \ : \ \phi\in I\} $ where $\pi'$ is
the canonical projection onto $\overline Z^2(M)$, $\overline
S=\End(\overline Z^2(M))$ and $I'=A'\overline S$. Since $\overline
Z^2(M)$ is dual Baer, $I'\overline Z^2(M)\leq^\oplus \overline
Z^2(M)$. It is clear that $I\overline Z^2(M)=I'\overline Z^2(M)$.
Since $\overline Z^2(M)\leq^\oplus M$, $I\overline
Z^2(M)\leq^\oplus M$.

$(1)\Rightarrow (3)$ Let $\phi \in S$. Since $\phi(\overline
Z^2(M))= \phi S(\overline Z^2(M))$ and $M$ is t-dual Baer,
$\phi(\overline Z^2(M))$ is a direct summand of $M$. Take
$e_i^2=e_i\in S, \ i\in \Lambda$ and $e_i(M)\subseteq \overline
Z^2(M)$. Let $I=\sum_{e_i\in \Lambda}e_i S$. Then $I(\overline
Z^2(M))=\sum_{\phi \in I}\phi(\overline Z^2(M))\leq \sum_{e_i\in
\Lambda} e_i M$. It is clear that $e_i(M)\subseteq \sum_{\phi \in
I}\phi(\overline Z^2(M)).$ Thus
 $ \sum_{e_i\in \Lambda} e_i M=\sum_{\phi \in I}\phi(\overline
Z^2(M))=I(\overline Z^2(M))\leq^\oplus M$ because $M$ is t-dual
Baer.

$(3)\Rightarrow (4)$ It is obvious, since $\phi(\overline
Z^2(M))\subseteq \overline Z^2(M)$ for every $\phi \in S$.

$(4)\Rightarrow (1)$ It is clear.
\end{proof}

Recall that a module $M$ is called a \emph{regular} module if every
cyclic submodule of $M$ is a direct summand of $M$.

\begin{cor} If $M$ has the strongly summand sum property for direct summands
which are contained in $\overline Z^2(M)$ and $M$ is regular, then
$M$ is t-dual Baer.
\end{cor}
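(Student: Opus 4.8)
The plan is to prove the Corollary by invoking Theorem \ref{3.2}, specifically the equivalence $(1)\Leftrightarrow(3)$, which characterizes t-dual Baer modules in terms of two conditions: the strongly summand sum property for direct summands contained in $\overline Z^2(M)$, and the requirement that $\phi(\overline Z^2(M))$ be a direct summand of $M$ for every $\phi\in S$. One of these hypotheses is already given to us directly, so the task reduces to deriving the second condition from regularity.

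First I would observe that the strongly summand sum property for direct summands contained in $\overline Z^2(M)$ is assumed outright, so I only need to establish that $\phi(\overline Z^2(M))$ is a direct summand of $M$ for every $\phi\in\End(M)$. The natural approach is to write $\phi(\overline Z^2(M))$ as a sum of cyclic submodules: indeed $\phi(\overline Z^2(M))=\sum_{x\in\overline Z^2(M)}\phi(x)R$, each summand $\phi(x)R$ being cyclic. By regularity, every cyclic submodule of $M$ is a direct summand of $M$. Moreover, since $\overline Z^2(M)$ is closed under the action of endomorphisms (it is fully invariant, being a radical-type functor $\overline Z^2$), each $\phi(x)$ lies in $\overline Z^2(M)$, so each cyclic summand $\phi(x)R$ is a direct summand of $M$ that is contained in $\overline Z^2(M)$.

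Now the strongly summand sum property for direct summands contained in $\overline Z^2(M)$ applies verbatim: the sum $\sum_{x\in\overline Z^2(M)}\phi(x)R=\phi(\overline Z^2(M))$ is a sum of direct summands all sitting inside $\overline Z^2(M)$, hence it is itself a direct summand of $M$. This delivers condition $(3)$ of Theorem \ref{3.2} in full, and therefore $M$ is t-dual Baer by that theorem.

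The only point that requires a moment's care is verifying that each cyclic piece $\phi(x)R$ is genuinely contained in $\overline Z^2(M)$, so that the restricted form of the strongly summand sum property can be invoked; this hinges on $\overline Z^2(M)$ being stable under endomorphisms of $M$, which follows from its definition as $\overline Z(\overline Z(M))$ with $\overline Z$ a rejection-type (hence fully invariant) construction. I expect no genuine obstacle here, as the argument is a direct assembly of regularity (giving each cyclic piece as a summand) with the hypothesized summand sum property (assembling them), matched precisely to the shape of Theorem \ref{3.2}$(3)$.
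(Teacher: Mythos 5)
Your argument is correct and is essentially the paper's own proof: both reduce via Theorem \ref{3.2}(3) to showing $\phi(\overline Z^2(M))$ is a direct summand, write it as a sum of cyclic submodules (each a direct summand by regularity and contained in $\overline Z^2(M)$ by full invariance), and then invoke the assumed strongly summand sum property. The only cosmetic difference is that you index the cyclic pieces by generators $\phi(x)$ with $x\in\overline Z^2(M)$ while the paper indexes by elements of the image; these are the same decomposition.
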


\begin{proof} By Theorem \ref{3.2}, it suffices to show that $\phi(\overline
Z^2(M))$ is a direct summand of $M$ for every $\phi \in S$. Let
$\phi \in S$  and $N=\phi(\overline Z^2(M))$. Suppose that
$N=\sum_{x\in N}xR$. By hypothesis, $N$ is a direct summand of $M$.
\end{proof}

\begin{cor} If $M$ is regular t-dual Baer, then $\overline Z^2(M)$
is semisimple.
\end{cor}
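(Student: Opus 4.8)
The plan is to deduce the result from the dual Baer characterisation of t-dual Baer modules together with the regularity hypothesis. By Theorem \ref{3.2}, since $M$ is t-dual Baer, $\overline Z^2(M)$ is a direct summand of $M$ and $\overline Z^2(M)$ is itself a dual Baer module. Writing $N=\overline Z^2(M)$ and $\overline S=\End(N)$, it therefore suffices to prove that every submodule of $N$ is a direct summand of $N$, i.e.\ that $N$ is semisimple; I would do this by exhibiting each submodule as a sum of images of endomorphisms of $N$ and invoking the dual Baer property of $N$.

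First I would transfer regularity from $M$ down to $N$. Let $x\in N$. Since $M$ is regular, the cyclic submodule $xR$ is a direct summand of $M$, say $M=xR\oplus K$; as $xR\subseteq N$, the modular law gives $N=xR\oplus(N\cap K)$, so $xR$ is a direct summand of $N$. Hence every cyclic submodule of $N$ is a direct summand of $N$, and for each $x\in N$ there is a projection $\pi_x\in\overline S$ with $\Img\pi_x=xR$ and $\pi_x(x)=x$.

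The heart of the argument is then the following. I would fix an arbitrary submodule $L\leq N$ and consider $D_{\overline S}(L)=\{\phi\in\overline S\mid\Img\phi\subseteq L\}$, which is a right ideal of $\overline S$ (it is additively closed, and $\Img(\phi\psi)\subseteq\Img\phi\subseteq L$ for $\phi\in D_{\overline S}(L)$, $\psi\in\overline S$). For each $x\in L$ the projection $\pi_x$ satisfies $\Img\pi_x=xR\subseteq L$, so $\pi_x\in D_{\overline S}(L)$ and $x=\pi_x(x)\in\Img\pi_x$. Consequently
\[
L=\sum_{x\in L}xR=\sum_{x\in L}\Img\pi_x\subseteq\sum_{\phi\in D_{\overline S}(L)}\Img\phi\subseteq L,
\]
so $L=\sum_{\phi\in D_{\overline S}(L)}\Img\phi$. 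Since $N$ is dual Baer and $D_{\overline S}(L)$ is a right ideal of $\overline S$, this sum is a direct summand of $N$; hence $L$ is a direct summand of $N$. As $L$ was arbitrary, $N=\overline Z^2(M)$ is semisimple.

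The substantive point, and the only place regularity is used, is the identity $L=\sum_{\phi\in D_{\overline S}(L)}\Img\phi$: regularity is exactly what guarantees enough idempotent endomorphisms (the projections $\pi_x$) to generate all of $L$ by images, after which the dual Baer property does the rest. Everything else, namely the descent of regularity via the modular law and the verification that $D_{\overline S}(L)$ is a right ideal, is routine.
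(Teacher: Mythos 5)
Your proof is correct, but it follows a different branch of Theorem \ref{3.2} than the paper does. The paper's proof is a two-line application of the equivalence $(1)\Leftrightarrow(3)$ of that theorem: for a submodule $N\leq \overline Z^2(M)$, write $N=\sum_{x\in N}xR$; each $xR$ is a direct summand of $M$ by regularity and is contained in $\overline Z^2(M)$, so the strongly summand sum property (condition $(3)$) makes $N$ a direct summand of $M$, hence of $\overline Z^2(M)$ --- no passage to $\End(\overline Z^2(M))$ is needed. You instead invoke $(1)\Leftrightarrow(2)$, descend regularity to $\overline Z^2(M)$ via the modular law, and then realize an arbitrary submodule $L$ as $\sum_{\phi\in D_{\overline S}(L)}\Img\phi$ using the projections $\pi_x$ onto the cyclic summands $xR$, so that the dual Baer property of $\overline Z^2(M)$ finishes the job. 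Your route is longer but entirely sound: the verification that $D_{\overline S}(L)$ is a right ideal, the identity $L=\sum_{\phi\in D_{\overline S}(L)}\Img\phi$, and the modular-law descent are all correct. What your approach buys is that it isolates exactly where regularity enters (producing enough idempotents to generate every submodule by images), and it only uses the ``dual Baer summand'' characterization of t-dual Baer, which is arguably the more conceptual one; what the paper's approach buys is brevity, since the summand sum property absorbs all the work that you do by hand with the right ideal $D_{\overline S}(L)$.
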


\begin{proof} Let $N\leq \overline Z^2(M)$. Suppose that $N=\sum_{x\in
N}xR$. By Theorem \ref{3.2}, $N$ is a direct summand of $M$ and so
it is a direct summand of $\overline Z^2(M)$.
\end{proof}

Now we give a relation between the properties of dual Baer  and
t-dual Baer modules.

\begin{prop} \label{3.5} A module $M$ is dual Baer and $\overline Z^2(M)$ is a direct summand of $M$ if and
only if  $M$ is t-dual Baer and $\sum_{\phi \in A}\phi( M)/
\sum_{\phi \in A}\phi(\overline Z^2(M))$ is a direct summand of
$M/\sum_{\phi \in A}\phi(\overline Z^2(M))$ for every subset $A$ of
$S$.
\end{prop}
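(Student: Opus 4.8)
The plan is to prove this biconditional by splitting into the two implications and leaning heavily on the characterizations already established. Let me set notation: write $\css = \overline Z^2(M)$ and, for a subset $A$ of $S$, let $K_A = \sum_{\phi \in A}\phi(M)$ and $K_A' = \sum_{\phi \in A}\phi(\css)$. Note that for every subset $A$, the sum $K_A$ is exactly $I(M)$ where $I = AS$ is the right ideal generated by $A$, and similarly $K_A' = I(\css)$; this lets me move freely between ``subset'' and ``right ideal'' formulations, using that $M$ is dual Baer iff $\sum_{\phi\in I}\phi(M)$ is a summand for every right ideal $I$, and (by Theorem \ref{3.2}(4)) that $M$ is t-dual Baer iff $K_A'$ is a summand of $M$ for every subset $A$.

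For the forward direction, I assume $M$ is dual Baer and $\css \leq^\oplus M$. First I would verify $M$ is t-dual Baer: since $\css \leq^\oplus M$, Theorem \ref{3.2}(2) reduces this to showing $\css$ is a dual Baer module, which I expect follows because $\css$ is a direct summand of the dual Baer module $M$ (a direct summand of a dual Baer module inherits the property, a standard fact I would cite from \cite{kt}). For the quotient condition, fix a subset $A$. Because $M$ is dual Baer, $K_A = \sum_{\phi\in A}\phi(M)$ is a direct summand of $M$; write $M = K_A \oplus T$. Because $M$ is t-dual Baer, $K_A'$ is a direct summand of $M$, and since $K_A' \leq K_A$ it is in fact a direct summand of $K_A$, say $K_A = K_A' \oplus U$. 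Then
$$M/K_A' = (K_A/K_A') \oplus ((K_A' \oplus T)/K_A'),$$
so $K_A/K_A' = (K_A/K_A') $ is a direct summand of $M/K_A'$, which is exactly the required statement.

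For the converse, I assume $M$ is t-dual Baer and the quotient condition holds for every subset $A$. That $\css \leq^\oplus M$ is immediate from t-dual Baer (Theorem \ref{3.2}(2)). To show $M$ is dual Baer, fix a subset $A$ (equivalently a right ideal generated by $A$) and aim to prove $K_A \leq^\oplus M$. By t-dual Baerness, $K_A'$ is a direct summand of $M$, say $M = K_A' \oplus W$. By the quotient hypothesis, $K_A/K_A'$ is a direct summand of $M/K_A' \cong W$; lifting this summand decomposition of $M/K_A'$ through the summand $K_A'$ gives a direct-summand decomposition of $M$ in which $K_A$ appears as a summand. Concretely, if $K_A/K_A' \oplus V/K_A' = M/K_A'$ with $K_A' \leq V$, then $M = K_A + V$ and $K_A \cap V = K_A'$; combined with $K_A' \leq^\oplus M$ one checks $K_A \leq^\oplus M$.

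The main obstacle I anticipate is the summand-lifting step in the converse: having a summand of the quotient $M/K_A'$ does not by itself return a summand of $M$ unless the quotient map splits appropriately, so I must genuinely use that $K_A'$ is itself a direct summand of $M$ (not merely a submodule) to transport the decomposition of $M/K_A'$ back up to $M$. The cleanest way to handle this is to observe that $M/K_A' \cong W$ and that the decomposition $M = K_A' \oplus W$ lets every summand of $M/K_A'$ be realized as $(K_A' \oplus W_0)/K_A'$ for a summand $W_0$ of $W$; tracking that $K_A = K_A' \oplus (K_A \cap W_0)$ then yields $K_A \leq^\oplus M$. I would also double-check the interchange between the subset form and the right-ideal form of both definitions at the outset, since the statement is phrased with subsets $A$ while the dual Baer definition uses right ideals, and reconciling these is what makes $K_A = \sum_{\phi \in A}\phi(M)$ line up correctly on both sides of the equivalence.
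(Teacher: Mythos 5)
Your proof is correct and follows essentially the same route as the paper, whose entire proof is the citation ``By Theorem \ref{3.2} and \cite[Corollary 2.5 and Theorem 2.1]{kt}''; your argument simply makes those citations explicit. The ingredients you supply --- passing between a subset $A$ and the right ideal $AS$ so that $\sum_{\phi\in A}\phi(M)=\sum_{\phi\in AS}\phi(M)$, the characterizations in Theorem \ref{3.2}, the modularity computation giving $K_A=K_A'\oplus(K_A\cap W)$, and the standard lifting of a direct summand of $M/K_A'$ through the summand $K_A'$ --- are exactly what the cited results encode, and each step checks out.
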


\begin{proof}  By Theorem \ref{3.2} and \cite[Corollary
2.5 and Theorem 2.1]{kt}.
\end{proof}

\begin{thm} \label{3.6} Every direct summand of a t-dual Baer module
is t-dual Baer.
\end{thm}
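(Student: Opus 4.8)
The plan is to reduce everything to the structural characterization of t-dual Baer modules in Theorem \ref{3.2}. Let $M$ be t-dual Baer with $S=\End(M)$, and write $M=N\oplus N'$ for a direct summand $N$. By Theorem \ref{3.2}, to conclude that $N$ is t-dual Baer it suffices to verify the two conditions of part $(2)$ for $N$, namely that $\overline Z^2(N)$ is a direct summand of $N$ and that $\overline Z^2(N)$ is a dual Baer module.

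The first step, and the linchpin of the argument, is to establish the decomposition $\overline Z^2(M)=\overline Z^2(N)\oplus \overline Z^2(N')$. This should follow from the additivity of $\overline Z^2$ on sums recorded in \cite[Theorem 3.5]{tv}, the same result repeatedly invoked in the proofs above: applying it to $M=N+N'$ gives $\overline Z^2(M)=\overline Z^2(N)+\overline Z^2(N')$, and since $\overline Z^2(N)\subseteq N$, $\overline Z^2(N')\subseteq N'$ and $N\cap N'=0$, this sum is direct.

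Granting this decomposition, the remaining steps are short. Since $M$ is t-dual Baer, Theorem \ref{3.2}$(2)$ says that $\overline Z^2(M)$ is a direct summand of $M$ and is dual Baer. From $\overline Z^2(M)=\overline Z^2(N)\oplus \overline Z^2(N')$ we read off that $\overline Z^2(N)$ is a direct summand of $\overline Z^2(M)$, hence a direct summand of $M$; as $\overline Z^2(N)\subseteq N\leq^\oplus M$, a summand of $M$ lying inside the summand $N$ is itself a summand of $N$, which gives the first condition. For the second, $\overline Z^2(N)$ is a direct summand of the dual Baer module $\overline Z^2(M)$, and every direct summand of a dual Baer module is again dual Baer by \cite{kt}; hence $\overline Z^2(N)$ is dual Baer. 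Both conditions of Theorem \ref{3.2}$(2)$ now hold for $N$, so $N$ is t-dual Baer.

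I expect the main obstacle to be precisely the first step, the direct-sum decomposition of $\overline Z^2$ along $M=N\oplus N'$; the rest is formal manipulation with summands. I would therefore take care to confirm that the cited additivity in \cite[Theorem 3.5]{tv} genuinely yields the equality $\overline Z^2(N+N')=\overline Z^2(N)+\overline Z^2(N')$ (and not merely one inclusion), since the whole argument hinges on $\overline Z^2$ splitting across the decomposition.
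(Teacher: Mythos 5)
Your proof is correct, but it follows a different branch of Theorem \ref{3.2} than the paper does. You verify condition $(2)$ for $N$: using $\overline Z^2(N\oplus N')=\overline Z^2(N)\oplus\overline Z^2(N')$ you get that $\overline Z^2(N)$ is a summand of $\overline Z^2(M)$, hence of $M$, hence (being inside the summand $N$, by modularity) of $N$; and you import from \cite{kt} the fact that a direct summand of a dual Baer module is dual Baer. The paper instead verifies condition $(3)$ of Theorem \ref{3.2} for $N$ directly: given summands $K_i$ of $N$ inside $\overline Z^2(N)$, it notes they are summands of $M$ inside $\overline Z^2(M)$ and applies the strongly summand sum property of $M$; and given $f\in\End(N)$ it extends to $f\oplus 0_{N'}\in\End(M)$ and computes $(f\oplus 0_{N'})(\overline Z^2(M))=f(\overline Z^2(N))$, which is a summand of $M$ and hence of $N$. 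Both arguments hinge on exactly the decomposition you flag as the main obstacle, and it does hold: $\overline Z$ is the reject of a class of modules, so it is a preradical and commutes with finite direct sums (the paper itself uses $\overline Z^2(N\oplus N')=\overline Z^2(N)\oplus\overline Z^2(N')$ in this proof and again in Theorem \ref{3.12}); note that for a general sum $A+B$ only one inclusion is automatic, but for a direct sum equality is standard. The trade-off is that your route is shorter but rests on the summand-hereditariness of dual Baer modules as an external citation, whereas the paper's route is self-contained, re-deriving that inheritance implicitly by working with the endomorphisms of $M$ extending those of $N$.
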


\begin{proof} Let $M=N\oplus N'$ and for every $i\in \Lambda $,
$K_i$ be a direct summand of $N$ such that $K_i\subseteq \overline
Z^2(N)$. Then $K_i\subseteq \overline Z^2(M)$ and since $M$ is
t-dual Baer, we have $\sum_{i\in \Lambda} K_i\leq^\oplus M$. Thus
$\sum_{i\in \Lambda} K_i\leq^\oplus N$. Let $f:N\rightarrow N$ be
a homomorphism. Consider the homomorphism $f\oplus 0_{N'}: N\oplus
N'\rightarrow N\oplus N'$ defined by $(f\oplus
0_{N'})(n+n')=f(n)$. Then $(f+0_{N'})(\overline
Z^2(M))=(f+0_{N'})(\overline Z^2(N)\oplus \overline
Z^2(N'))=f(\overline Z^2(N))$. As $M$ is t-dual Baer, $f(\overline
Z^2(N))\leq^\oplus M $ and hence it is a direct summand of $N$.
Therefore $N$ is t-dual Baer.
\end{proof}

Recall that a module $M$ is a \emph{$\mathcal{K}$-module} if for
every submodule $N$ of $M$, $D_S(N)=0$ implies that $N$ is small in
$M$.

Let $M$ be an $R$-module and $S= \End(M)$. For a submodule $N$ of
$M$ we denote $T_S(N)=\{\phi \in S: \phi(\overline
Z^2(M))\subseteq N\}.$

\begin{defn} {\rm  A module $M$ is called a \emph{t-$\mathcal{K}$-module} if
for every submodule $N$ of $M$, $T_S(N)=T_S(0)$ implies that $N$ is
t-small in $M$. Moreover, a module $M$ is called a \emph{strongly
t-$\mathcal{K}$-module} if for every submodule $N$ of $M$,
$T_S(N)=T_S(0)$ implies that $N$ is  small in $M$.}
\end{defn}

It is clear that every strongly t-$\mathcal{K}$-module is a
t-$\mathcal{K}$-module.  Obviously, for  noncosingular modules the
notions of $\mathcal{K}$-modules and t-$\mathcal{K}$-modules and
strongly t-$\mathcal{K}$-modules are equivalent.

\begin{prop} \label{3.8} Let $M$ be an amply supplemented module.
Then:

$(1)$ $M$ is a t-$\mathcal{K}$-module if and only if for every
submodule $N$ of $M$ which is contained in $\overline Z^2(M)$,
$T_S(N)=T_S(0)$ implies that $N$ is  small in $M$.

$(2)$ If $M$ is a t-$\mathcal{K}$-module, then $\overline Z^2(M)$ is
a $\mathcal{K}$-module.
\end{prop}

\begin{proof} $(1)$ The implication $(\Rightarrow)$ follows by Proposition
\ref{2.2}(3). For $(\Leftarrow)$, let $N$ be a submodule of $M$ and
$T_S(N)=T_S(0)$. Since $T_S(N\cap \overline Z^2(M))=T_S(N)=T_S(0)$,
by hypothesis, $N\cap \overline Z^2(M)$ is  small in $M$. Hence
$N\ll_t M$.

$(2)$ Let $\overline S=\End(\overline Z^2(M))$ and $N$ be a
submodule of $\overline Z^2(M)$ such that $D_{\overline S}(N)=0$.
Then $T_S(N)=T_S(0)$. For, let $\phi \in T_S(N)$, then $\overline
\phi=\phi|_{\overline Z^2(M)}:\overline Z^2(M)\rightarrow
\overline Z^2(M)$ is a homomorphism such that $\overline
\phi(\overline Z^2(M))\subseteq N$, thus $\overline \phi\in
D_{\overline S}(N)=0$ and so $\phi\in T_S(0)$; hence,
$T_S(N)=T_S(0)$. By hypothesis, $N$ is t-small in $M$. Therefore
$N\ll \overline Z^2(M)$ by Proposition \ref{2.2}.
\end{proof}

\begin{thm} \label{3.9}  Let $M$ be an amply supplemented module. Then the
following are equivalent:

$(1)$ $M$ is t-lifting.

$(2)$ $M$ is t-dual Baer and t-$\mathcal{K}$-module.

$(3)$ $M$ is t-dual Baer and $C=T_S(C)(\overline Z^2(M))$ for
every t-coclosed submodule $C$ of $M$.

$(4)$ $M$ is t-dual Baer and for every t-coclosed submodule $C$ of
$M$ if $T_S(C)=T_S(0)$, then $C=0$.
\end{thm}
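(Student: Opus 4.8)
The plan is to prove the implications $(1)\Rightarrow(2)$, $(1)\Rightarrow(3)$, $(2)\Rightarrow(4)$, $(3)\Rightarrow(4)$ and $(4)\Rightarrow(1)$, which together yield all four equivalences. Throughout I would exploit two structural facts. First, $\overline Z^2(M)$ is fully invariant, so for any submodule $C$ the set $T_S(C)$ is a right ideal of $S$; consequently, whenever $M$ is t-dual Baer, $T_S(C)(\overline Z^2(M))=\sum_{\phi\in T_S(C)}\phi(\overline Z^2(M))$ is a direct summand of $M$. Second, one always has $T_S(C)(\overline Z^2(M))\subseteq C$, while $T_S(0)(\overline Z^2(M))=0$. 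The backbone of the argument is Theorem~\ref{2.11}, which lets me pass freely between ``t-lifting'' and ``every t-coclosed submodule is a direct summand'', together with Proposition~\ref{2.6}, which identifies t-coclosed submodules with noncosingular ones.

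For $(1)\Rightarrow(2)$ I would first obtain t-dual Baer: for a right ideal $I$, each $\phi(\overline Z^2(M))$ is noncosingular, hence so is the sum $I(\overline Z^2(M))$, whence $\overline Z^2(I(\overline Z^2(M)))=I(\overline Z^2(M))$ is a direct summand by Theorem~\ref{2.11}(4). For the t-$\mathcal K$ property I would use Proposition~\ref{3.8}(1) to reduce to a submodule $N\subseteq\overline Z^2(M)$ with $T_S(N)=T_S(0)$; by Theorem~\ref{2.11}(7) there is an idempotent $e\in S$ with $eM\subseteq N$ and $N/eM\ll M/eM$, and the distributivity of $\overline Z^2$ over the summand $eM$ gives $e(\overline Z^2(M))=eM$, so $e\in T_S(N)=T_S(0)$ forces $eM=0$ and hence $N\ll M$. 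The implication $(1)\Rightarrow(3)$ runs similarly: a t-coclosed $C$ is noncosingular, hence a direct summand by Theorem~\ref{2.11}(3), and its projection idempotent $e$ satisfies $e(\overline Z^2(M))=C$, so $e\in T_S(C)$ and $C\subseteq T_S(C)(\overline Z^2(M))\subseteq C$.

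The implications into $(4)$ are short. For $(3)\Rightarrow(4)$, if $C$ is t-coclosed with $T_S(C)=T_S(0)$ then $C=T_S(C)(\overline Z^2(M))=T_S(0)(\overline Z^2(M))=0$. For $(2)\Rightarrow(4)$, the t-$\mathcal K$ hypothesis makes such a $C$ t-small, so $\overline Z^2(C)=0$ by Proposition~\ref{2.2}; but $C$ noncosingular means $\overline Z^2(C)=C$, forcing $C=0$. The real work is $(4)\Rightarrow(1)$: given a t-coclosed $C$, I set $D=T_S(C)(\overline Z^2(M))$, a direct summand $M=D\oplus D'$ with $D\subseteq C$, and use the modular law to write $C=D\oplus(C\cap D')$. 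Here $C\cap D'$ is a direct summand of the noncosingular module $C$, hence itself noncosingular and so t-coclosed; the key computation is that any $\phi$ with $\phi(\overline Z^2(M))\subseteq C\cap D'$ lands in $T_S(C)$, so $\phi(\overline Z^2(M))\subseteq D\cap D'=0$, giving $T_S(C\cap D')=T_S(0)$. Hypothesis $(4)$ then yields $C\cap D'=0$, so $C=D$ is a direct summand, and $M$ is t-lifting by Theorem~\ref{2.11}(3).

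The steps I expect to be delicate are the identity $e(\overline Z^2(M))=eM$ for a summand $eM\subseteq\overline Z^2(M)$ and the right-ideal property of $T_S(C)$, both of which rest on $\overline Z^2$ being fully invariant and distributing over direct summands (as already used in the proof of Theorem~\ref{3.6}). The genuine obstacle, however, is $(4)\Rightarrow(1)$: extracting the summand $C\cap D'$ and verifying $T_S(C\cap D')=T_S(0)$ is where the t-dual Baer condition and hypothesis $(4)$ must be combined, and getting this decomposition argument right is the crux of the whole theorem.
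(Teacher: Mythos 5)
Your proof is correct, and it uses the same implication cycle as the paper ($(1)\Rightarrow(2)$, $(1)\Rightarrow(3)$, $(2)\Rightarrow(4)$, $(3)\Rightarrow(4)$, $(4)\Rightarrow(1)$), but two of the legs are argued genuinely differently. For the t-dual Baer half of $(1)\Rightarrow(2)$ you note that $I(\overline Z^2(M))=\sum_{\phi\in I}\phi(\overline Z^2(M))$ is noncosingular (a sum of images of a noncosingular module) and apply Theorem \ref{2.11}(4) directly; the paper instead passes through Theorem \ref{3.2} and the external fact that a noncosingular lifting module is dual Baer (\cite[Theorem 2.14]{kt}). Your route is shorter and self-contained. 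The larger divergence is $(4)\Rightarrow(1)$: the paper takes an arbitrary $N\subseteq\overline Z^2(M)$, sets $eM=T_S(N)(\overline Z^2(M))$, and argues by contradiction via a supplement $C\subseteq N$ of a hypothetical $K$ with $M=K+N$, showing $T_S(C)=T_S(0)$ because $\phi(\overline Z^2(M))\subseteq K\cap C\ll M$ forces $\phi(\overline Z^2(M))=0$ (small and noncosingular, hence zero). You instead take a t-coclosed $C$, split $C=D\oplus(C\cap D')$ with $D=T_S(C)(\overline Z^2(M))$ a summand of $M$ contained in $C$, observe that $C\cap D'$ is again noncosingular hence t-coclosed with $T_S(C\cap D')=T_S(0)$, and kill it by hypothesis $(4)$; this verifies Theorem \ref{2.11}(3) rather than (7) and avoids the supplement machinery entirely. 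The delicate points you flag --- the identity $e(\overline Z^2(M))=eM$ for a summand $eM\subseteq\overline Z^2(M)$ and the right-ideal property of $T_S(C)$ --- both follow from full invariance of $\overline Z^2(M)$ and its additivity over direct summands, exactly as used in Theorem \ref{3.6}, so the argument goes through.
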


\begin{proof} $(1)\Rightarrow (2)$ By Theorem \ref{2.11}, $\overline
Z^2(M)$ is a direct summand of $M$ and $\overline Z^2(M)$ is
lifting. By \cite[Theorem 2.14]{kt}, every noncosingular lifting
module is dual Baer and so $\overline Z^2(M)$ is dual Baer. By
Theorem \ref{3.2}, $M$ is t-dual Baer. Now, by proposition
\ref{3.8}, it suffices to show that if $N$  is a submodule of $M$
which is contained in $\overline Z^2(M)$, then $T_S(N)=T_S(0)$
implies that $N\ll M$. As $M$ is t-lifting, there exists a direct
summand $K$ of $M$ such that $N/K\ll_t M/K$. By Proposition
\ref{2.2}, $N/K\cap \overline Z^2(M/K)\ll M/K$. But $N/K\subseteq
(\overline Z^2(M)+K)/K=\overline Z^2(M/K)$, thus $N/K\ll M/K$. Let
$M=K\oplus K'$ and $K\neq 0$. Then $\overline Z^2(K)\neq 0$ since
if $\overline Z^2(K)=0$, then $0\neq K\subseteq N\subseteq
\overline Z^2(M)=\overline Z^2(K')\subseteq K'$. But $K\cap K'=0$,
contradiction. Now consider the canonical projection $\pi_K: M
\rightarrow K$. Then $\pi_K\in T_S(N)$ and $\pi_K\not \in T_S(0)$,
which is a contradiction. Therefore $K=0$ and so $N\ll M$.

$(1)\Rightarrow (3)$ By the proof of $(1)\Rightarrow (2)$, $M$ is
t-dual Baer. Let $C$ be a t-coclosed submodule of $M$. Obviously,
$T_S(C)(\overline Z^2(M))\subseteq C$. By hypothesis, $C$ is a
direct summand of $M$, say $M=C\oplus C'$. Consider the canonical
projection $\pi$ onto $C$. It is clear that $\pi\in T_S(C)$. By
Proposition \ref{2.6}, $C\subseteq \overline Z^2(M)$, thus
$C=\pi(C)\subseteq \pi(\overline Z^2(M))\subseteq T_S(C)(\overline
Z^2(M))$. Hence $C= T_S(C)(\overline Z^2(M))$.

$(2)\Rightarrow (4)$ Clear by Lemma \ref{2.5} and Propositions
\ref{2.2} and \ref{3.8}.

$(3)\Rightarrow (4)$ Let $C$ be a t-coclosed submodule of $M$ such
that $T_S(C)=T_S(0)$. By assumption, $C=T_S(C)(\overline
Z^2(M))=T_S(0)(\overline Z^2(M))=0$.

$(4)\Rightarrow (1)$ By Theorem \ref{2.11}, it suffices to show that
for any submodule $N$ of $M$ which is contained in $\overline
Z^2(M)$, there exists a direct summand $A$ of $M$ such that $N/A\ll
M/A$. Let $N$ be such a submodule of $M$. Since $M$ is t-dual Baer,
$eM=\sum_{\phi \in T_S(N)}\phi(\overline Z^2(M))=T_S(N)(\overline
Z^2(M))\subseteq N$ for some idempotent $e\in S$. If $N/eM$ is not
small in $M/eM$, then there exists a proper submodule $K/eM$ of
$M/eM$ with $eM\subseteq K$ such that $M/eM=K/eM+N/eM$. Restrict $N$
to a supplement $C$ of $K$ in $M$. $C$ is a coclosed submodule of
$M$ and $C\subseteq \overline Z^2(M)$, and so by Proposition
\ref{2.6}, $C$ is t-coclosed. Now we show that $T_S(C)=T_S(0)$. Let
$\phi \in T_S(C). $ Then $\phi(\overline Z^2(M))\subseteq C$, and so
$\phi(\overline Z^2(M))\subseteq N$, hence $\phi\in T_S(N)$. As
$eM=\sum_{\phi \in T_S(N)}\phi(\overline Z^2(M))$, we have
$\phi(\overline Z^2(M))\subseteq eM$. Thus $\phi(\overline
Z^2(M))\subseteq K$. Consequently, $\phi(\overline Z^2(M))\subseteq
K\cap C$. But $K\cap C\ll M$ implies that $\phi(\overline Z^2(M))\ll
M$. Hence $\phi(\overline Z^2(M))=0$. Hence $\phi\in T_S(0)$. Thus
$T_S(C)=T_S(0)$. By hypothesis $C=0$, and so $M=K$, which is a
contradiction. Therefore $N/eM\ll M/eM$.
\end{proof}

\begin{cor} \label{3.10} The following are equivalent for an amply supplemented
module $M$:

$(1)$ $M$ is noncosingular lifting.

$(2)$ $M$ is t-dual Baer and strongly t-$\mathcal{K}$-module.

$(3)$ $M$ is t-dual Baer and $C=T_S(C)(\overline Z^2(M))$ for
every coclosed submodule $C$.

$(4)$ $M$ is t-dual Baer and for any coclosed submodule $C$ of $M$,
if $T_S(C)=T_S(0)$, then $C=0$.
\end{cor}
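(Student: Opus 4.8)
The plan is to reduce the entire statement to Theorem \ref{3.9} by first proving that each of the four conditions forces $M$ to be noncosingular; once $M=\overline Z^2(M)$, every t-prefixed notion collapses to its ordinary counterpart (t-dual Baer to dual Baer, strongly t-$\mathcal{K}$ and t-$\mathcal{K}$ to $\mathcal{K}$-module, t-coclosed to coclosed, and t-lifting to lifting), so that conditions (1)--(4) here become verbatim conditions (1)--(4) of Theorem \ref{3.9}.

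First I would handle the reduction to noncosingularity. Condition (1) is noncosingular by definition. For (2), (3), (4) the common hypothesis is that $M$ is t-dual Baer, so by Theorem \ref{3.2} the submodule $\overline Z^2(M)$ is a direct summand; write $M=\overline Z^2(M)\oplus W$ and aim to show $W=0$. Using that $\overline Z^2$ commutes with this direct decomposition and that $\overline Z^2(\overline Z^2(M))=\overline Z^2(M)$, one obtains $\overline Z^2(M)=\overline Z^2(M)\oplus \overline Z^2(W)$, forcing $\overline Z^2(W)=0$.

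The crux is then to prove $T_S(W)=T_S(0)$. For any $\phi\in S$ the image $\phi(\overline Z^2(M))$ is a homomorphic image of a noncosingular module, hence noncosingular. If $\phi(\overline Z^2(M))\subseteq W$, then since $\overline Z$ is a preradical and therefore monotone on inclusions, $\phi(\overline Z^2(M))=\overline Z^2(\phi(\overline Z^2(M)))\subseteq \overline Z^2(W)=0$, so $\phi\in T_S(0)$; the reverse inclusion is trivial. Now $W$ is a direct summand and hence coclosed, so conditions (3) and (4) apply directly to it: under (4), $T_S(W)=T_S(0)$ gives $W=0$; under (3), $W=T_S(W)(\overline Z^2(M))=T_S(0)(\overline Z^2(M))=0$. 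Under (2), the strongly t-$\mathcal{K}$ property turns $T_S(W)=T_S(0)$ into the assertion that $W$ is small in $M$, and a small direct summand is $0$. In every case $M=\overline Z^2(M)$ is noncosingular.

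Finally, with $M$ noncosingular I would invoke the equivalences already recorded after the relevant definitions: condition (1) is then plain lifting, condition (2) is dual Baer together with the $\mathcal{K}$-module property, and conditions (3), (4) are stated over coclosed submodules, which now coincide with t-coclosed submodules. These are exactly conditions (1)--(4) of Theorem \ref{3.9} specialised to $\overline Z^2(M)=M$, so their mutual equivalence follows from that theorem. Combining this with the reduction step (each of (1)--(4) implies noncosingularity, whence the corresponding Theorem \ref{3.9} condition, whence all the others) yields the full cycle of equivalences. The main obstacle is the computation $T_S(W)=T_S(0)$, and in particular the step that a noncosingular submodule sitting inside a module $W$ with $\overline Z^2(W)=0$ must vanish, which is where the preradical behaviour of $\overline Z$ and the idempotence $\overline Z^2(\overline Z^2(M))=\overline Z^2(M)$ are essential.
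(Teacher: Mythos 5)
Your proof is correct and follows essentially the same route as the paper: the paper's own argument for $(4)\Rightarrow(1)$ is exactly your key reduction (write $M=\overline Z^2(M)\oplus K$ via Theorem \ref{3.2}, observe $T_S(K)=T_S(0)$, conclude $K=0$ so that $M$ is noncosingular, then invoke Theorem \ref{3.9}), and the other implications are likewise drawn from Theorem \ref{3.9}. The only difference is organizational: you apply the noncosingularity reduction uniformly to each of (2), (3), (4) and then quote Theorem \ref{3.9} wholesale for the noncosingular case, whereas the paper chains $(1)\Rightarrow(2)\Rightarrow(4)\Rightarrow(1)$ and $(1)\Rightarrow(3)\Rightarrow(4)$, reserving that reduction for the single step $(4)\Rightarrow(1)$.
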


\begin{proof} $(1)\Rightarrow (2)$  and $(1)\Rightarrow (3)$ By
Theorem \ref{3.9}.

$(2)\Rightarrow (4)$ This is clear.

$(3)\Rightarrow (4)$ Similar to the proof of Theorem \ref{3.9}
($(3)\Rightarrow (4)$).

$(4)\Rightarrow (1)$  By Theorem \ref{3.2}, $M=\overline
Z^2(M)\oplus K$ for some submodule $K$ of $M$ and $\overline Z^2(M)
$ is dual Baer. Clearly $K$ is closed submodule and $T_S(K)=T_S(0)$.
By $(4)$, $K=0$ and so $M=\overline Z^2(M)$. Hence  $M$ is
noncosingular.  By Theorem \ref{3.9}, $M$ is lifting.
\end{proof}

\begin{exam} {\rm (1) By Theorem \ref{3.9}, every lifting module
is t-dual Baer. But there exists t-dual Baer modules which are not
lifting. Consider the $\Bbb Z$-module $M=\Bbb Z/p\Bbb Z\oplus \Bbb
Z/p^3\Bbb Z$ in Example \ref{first}(2). It is amply supplemented
and t-lifting. By Theorem \ref{3.9}, it is t-dual Baer. But it is
not lifting.

(2) Let $R$ be a semiperfect ring which is not semisimple. Then
the right $R$-module $R_R$ is lifting by \cite[Corollary
4.42]{mm}. Hence it is t-lifting. Then by Theorem \ref{3.9}, $R_R$
is t-dual Baer. On the other hand, $R_R$ is not dual Baer by
\cite[Corollary 2.9]{kt}.

(3) If $R$ is a right $H$-ring, then every injective $R$-module is
t-lifting and t-dual Baer.}
\end{exam}

\begin{thm} \label{3.12} Let $R$ be a right perfect ring. Then the  following statements are equivalent:

$(1)$ Every noncosingular $R$-module is injective.

$(2)$ For every $R$-module $M$, $\overline Z^2(M)$ is a direct
summand of $M$ and $\overline Z^2(M)$ is  injective.

$(3)$  Every  $R$-module is t-dual Baer.

$(4)$ Every  $R$-module is t-lifting.

$(5)$ Every injective  $R$-module is t-lifting.

$(6)$ Every noncosingular  $R$-module is dual Baer and $\overline
Z^2(M)$ is a direct summand of $M$  for every $R$-module $M$.

$(7)$ Every noncosingular  $R$-module is lifting and $\overline
Z^2(M)$ is a direct summand of $M$ for every $R$-module $M$.
\end{thm}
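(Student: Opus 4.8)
The plan is to establish the single cycle
$(1)\Rightarrow(2)\Rightarrow(4)\Rightarrow(3)\Rightarrow(6)\Rightarrow(7)\Rightarrow(5)\Rightarrow(1)$,
using throughout the standard fact that over a right perfect ring \emph{every} $R$-module is amply supplemented, so that Theorems \ref{2.11}, \ref{3.2}, \ref{3.9} and Corollary \ref{3.10} are available for all modules. Two elementary observations will be used repeatedly: first, $\overline Z^2(M)$ is always noncosingular by \cite[Theorem 3.5]{tv}, so a noncosingular module $N$ satisfies $\overline Z^2(N)=N$; second, a small module $L$ is cosingular, since the identity of $L$ witnesses $\overline Z(L)=0$, whence a module that is simultaneously small and noncosingular is zero. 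I also note that $\overline Z$, being a reject, is order-preserving on submodules, so $\overline Z^2(N)\subseteq\overline Z^2(E)$ whenever $N\leq E$.

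For $(1)\Rightarrow(2)$: since $\overline Z^2(M)$ is noncosingular, $(1)$ makes it injective, hence a direct summand of $M$. For $(2)\Rightarrow(4)$ I fix $M$ and, by Theorem \ref{2.11}, need only that $L:=\overline Z^2(M)$ is lifting. If $C$ is a t-coclosed submodule of $L$, then $C$ is noncosingular by Proposition \ref{2.6}, so $C=\overline Z^2(C)$ is injective by $(2)$ and therefore a direct summand of $L$; thus every t-coclosed submodule of $L$ is a summand and $L$ is lifting by Theorem \ref{2.11}. The implication $(4)\Rightarrow(3)$ is Theorem \ref{3.9} applied to each module, and $(3)\Rightarrow(6)$ is a direct reading of Theorem \ref{3.2} together with $\overline Z^2(N)=N$ for noncosingular $N$.

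The essential-extension part $(7)\Rightarrow(5)\Rightarrow(1)$ closes the loop. For an injective $E$, hypothesis $(7)$ makes $\overline Z^2(E)$ a lifting direct summand, so $E$ is t-lifting by Theorem \ref{2.11}, giving $(5)$. For $(5)\Rightarrow(1)$ take a noncosingular $N$ and pass to $E=E(N)$; then $N=\overline Z^2(N)\subseteq\overline Z^2(E)$, and $(5)$ with Theorem \ref{2.11} splits $E=\overline Z^2(E)\oplus E'$, forcing $E'=0$ by essentiality, so $E=\overline Z^2(E)$ is lifting. Applying the lifting property to $N$ yields a summand $K\subseteq N$ with $N/K\ll E/K$; writing $E=K\oplus K'$ gives $N\cap K'\ll K'\leq E$, so $N\cap K'$ is a small module, while it is also noncosingular as a summand of $N$, hence $N\cap K'=0$. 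Thus $N=K$ is a direct summand of $E$, and being essential it equals $E$, so $N$ is injective.

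The remaining edge $(6)\Rightarrow(7)$ is the crux, and it is where I expect the real difficulty: over a right perfect ring a noncosingular dual Baer module $M$ must be shown to be lifting. By Corollary \ref{3.10}, in which the strongly t-$\mathcal{K}$ condition collapses to the $\mathcal{K}$-module condition for noncosingular $M$ (there $T_S(N)=D_S(N)$ and $T_S(0)=0$), it suffices to prove $M$ is a $\mathcal{K}$-module, i.e. $D_S(N)=0$ implies $N\ll M$. Assuming $N\not\ll M$, ample supplementation yields a nonzero supplement $K\subseteq N$, which is coclosed and, by Proposition \ref{2.6}, noncosingular; the goal is to manufacture from $K$ a nonzero endomorphism of $M$ with image in $N$, contradicting $D_S(N)=0$. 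This is precisely where right perfectness must enter essentially—through the existence of projective covers, the smallness of $\Rad(M)$, and the dual Baer property that every $\Img\phi$ is a direct summand of $M$—and I expect it to be the main obstacle of the argument; the easy converse noncosingular lifting $\Rightarrow$ dual Baer, needed implicitly for consistency, is \cite[Theorem 2.14]{kt}.
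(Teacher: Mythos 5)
Your cycle $(1)\Rightarrow(2)\Rightarrow(4)\Rightarrow(3)\Rightarrow(6)\Rightarrow(7)\Rightarrow(5)\Rightarrow(1)$ would suffice, and every edge except one is correct and close in spirit to the paper's argument (the paper runs $(2)\Rightarrow(3)\Rightarrow(4)$ where you run $(2)\Rightarrow(4)\Rightarrow(3)$, and its $(5)\Rightarrow(1)$ is shorter --- it just applies Theorem \ref{2.11}(4) with $A=N$ to get $N=\overline Z^2(N)\leq^\oplus E(N)$ directly --- but your longer version of that step is sound). The problem is that the edge you yourself flag as ``the crux,'' $(6)\Rightarrow(7)$, is not actually proved: you reduce it via Corollary \ref{3.10} to showing that a noncosingular dual Baer module over a right perfect ring is a $\mathcal{K}$-module, and then stop at ``the goal is to manufacture from $K$ a nonzero endomorphism of $M$ with image in $N$,'' conceding that you do not see how to do this. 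That concession is the gap. A dual Baer module need not be a $\mathcal{K}$-module in general, and nothing in your sketch extracts the required endomorphism from projective covers or from $\Rad(M)\ll M$; hypothesis $(6)$ is a statement about \emph{all} noncosingular modules, and restricting attention to endomorphisms of the single module $M$ throws away exactly the leverage you need.

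The paper's proof of $(6)\Rightarrow(7)$ avoids the $\mathcal{K}$-module condition entirely by an external direct sum trick: given a noncosingular $M$ and a coclosed submodule $K\leq M$, note that $K$ is noncosingular by \cite[Lemma 2.3(3)]{tv}, so $M\oplus K$ is noncosingular and hence dual Baer by $(6)$; the endomorphism $\phi(m,k)=(k,0)$ of $M\oplus K$ then has image $K\oplus 0$, which is a direct summand of $M\oplus K$ by \cite[Theorem 2.1]{kt}, whence $K\leq^\oplus M$. Since every module is amply supplemented over a right perfect ring, $M$ is lifting. This is the same device the paper uses for its $(3)\Rightarrow(4)$ (which you replace, legitimately, by Theorem \ref{3.9}). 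If you graft this one argument into your write-up in place of the unfinished $\mathcal{K}$-module reduction, the rest of your proof stands.
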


\begin{proof} $(1) \Rightarrow (2)$  Since $\overline Z^2(M)$ is
noncosingular, by $(1)$, $\overline Z^2(M)$ is injective. Thus
$\overline Z^2(M)$ is a direct summand of $M$.

$(2) \Rightarrow (1)$ Clear.

$(2) \Rightarrow (3)$  Let $M$ be any $R$-module. By $(2)$,
$\overline Z^2(M)$ is a direct summand of $M$. Let $C$ be a
coclosed submodule of $\overline Z^2(M)$. By \cite[Lemma
2.3(2)]{tv}, $C$ is noncosingular. By $(2)$,  $C$ is injective,
and so it is a direct summand of $\overline Z^2(M)$. Consequently,
$\overline Z^2(M)$ is lifting. By \cite[Theorem 2.14]{kt},
$\overline Z^2(M)$ is dual Baer. Therefore $M$ is t-dual Baer by
Theorem \ref{3.2}.

$(4) \Rightarrow (5)$ Clear.

$(5) \Rightarrow (1)$  Let $M$ be a noncosingular  module and $E(M)$
be the injective hull of $M$. Since $M$ is noncosingular, $\overline
Z^2(M)=M$. By $(5)$, $E(M)$ is t-lifting. Then by  Theorem
\ref{2.11}(2), $\overline Z^2(M)\leq^\oplus E(M)$. Thus $M$ is
injective.

$(7) \Rightarrow (4)$ Let $M$ be any $R$-module. By $(7)$,
$\overline Z^2(M)$ is lifting and $\overline Z^2(M)\leq^\oplus M$.
Thus $M$ is t-lifting by Theorem \ref{2.11}.

$(3) \Rightarrow (6)$ Let $X$ be a noncosingular module. By $(3)$,
$X$ is t-dual Baer and hence it is dual Baer. Let $M$ be any
$R$-module. By $(3)$ and Theorem \ref{3.2}, $\overline
Z^2(M)\leq^\oplus M$.

$(3) \Rightarrow (4)$ Let $M$ be any $R$-module. Let $K\leq M$ and
define $\phi :  M\oplus K\rightarrow M\oplus K$ by $\phi(m,
k)=(k,0)$. Note that $M\oplus K$ is t-dual Baer by $(3)$. Then by
Theorem \ref{3.2}, $\phi(\overline Z^2(M\oplus K))=\phi(\overline
Z^2(M)\oplus \overline Z^2(K))=\overline Z^2(K)\oplus 0\leq^\oplus
 M\oplus K$. Thus $\overline Z^2(K)\leq^\oplus M$. By
Theorem \ref{2.11}, $M$ is t-lifting.

$(6) \Rightarrow (7)$ Let $M$ be a noncosingular $R$-module. Let $K$
be a coclosed submodule of $M$. By \cite[Lemma 2.3(3)]{tv}, $K$ is
noncosingular, and so $M\oplus K$ is noncosingular. Then by $(6)$,
$M\oplus K$ is dual Baer. Define $\phi :  M\oplus K\rightarrow
M\oplus K$ by $\phi(m, k)=(k,0)$. By \cite[Theorem 2.1]{kt},
$\phi(M\oplus K)=K\oplus 0\leq^\oplus M\oplus K$. Then $K\leq^\oplus
M$. So $M$ is lifting.
\end{proof}

\medskip \noindent
{\bf Acknowledgments}

This work has been done during a visit of the first author to the
second author in the Department of Mathematics, Hacettepe
University in 2011. She wishes to thank the Department of
Mathematics, Hacettepe University for their kind hospitality. The
first author also wishes to thank the Ministry of Science of Iran
for the support.

\end{document}